\documentclass[11pt]{amsart}

\usepackage[margin=2.7cm]{geometry}
\usepackage{amsmath,amssymb,amsthm}
\usepackage{booktabs}
\usepackage{array}
\usepackage{verbatim}
\usepackage[colorlinks=true,linkcolor=blue,citecolor=blue,urlcolor=blue]{hyperref}

\newtheorem{theorem}{Theorem}[section]
\newtheorem{proposition}[theorem]{Proposition}
\newtheorem{lemma}[theorem]{Lemma}
\newtheorem{corollary}[theorem]{Corollary}
\newtheorem{conjecture}[theorem]{Conjecture}
\theoremstyle{definition}

\newtheorem{remark}[theorem]{Remark}

\newtheorem{question}[theorem]{Question}

\newcommand{\F}{\mathbb{F}}
\newcommand{\Fn}{\mathbb{F}_{2^n}}
\newcommand{\Tr}{\operatorname{Tr}}
\newcommand{\MCM}{\operatorname{MCM}}
\newcommand{\supp}{\operatorname{supp}}
\newcommand{\ch}{\chi}

\title[On a conjecture on the Kasami APN function]{On a conjecture on the Kasami APN function:\\
reductions, structure theorems, a proof for $k\bmod n\in\{1,2,n{-}2,n{-}1\}$,\\
and exhaustive verification for $n\le 13$}
\author{G.~P.~Nagy}
\author{A.~Vajda}
\address{Bolyai Institute, University of Szeged, Aradi V\'{e}rtan\'{u}k tere 1, H-6720 Szeged, Hungary}
\thanks{This report documents a complete working record: the initial plan,
all calculations and derivations (including a refuted first hypothesis), full
proofs of the results obtained, and the numerical verification protocol. The
proofs were obtained by prompting the AI assistant \emph{Claude Fable 5} with
the conjecture statement, first formulated by Carlet~\cite{Carlet2018} and
subsequently posed as an open Kasami problem at NSUCRYPTO~2019~\cite{NSUC2019},
together with background hints and a pointer to the
companion repository \cite{LeanRepo}. All new results and proofs have
subsequently been formally verified in the \textsc{Lean}~4 theorem prover by
Aristotle (Harmonic); see the Acknowledgements.}
\date{August 16, 2026}

\begin{document}

\begin{abstract}
We study Carlet's cyclic-additive conjecture for the Kasami almost perfect nonlinear (APN) function
$F(x)=x^{4^k-2^k+1}$ on $\Fn$, $\gcd(k,n)=1$: for the $2^{n-1}$-element set
$\Delta=\{F(b)+F(b+1)+1: b\in\Fn\}$ and all distinct nonzero $v_1,v_2\in\Fn$,
\[
\bigl|\{(x,y,z)\in\Delta^3 : v_1x+v_2y+(v_1+v_2)z=0\}\bigr| \;=\; 2^{2n-3}.
\]
This exact triple-count condition was first formulated by Carlet in his 2018
cyclic-additive difference-set framework~\cite{Carlet2018}; the Kasami instance
was subsequently posed as an open problem at the NSUCRYPTO~2019 cryptographic
olympiad, whose individual proposer was not publicly disclosed~\cite{NSUC2019}.

We prove the conjecture for $k\bmod n\in\{1,2,n-2,n-1\}$, in particular a
complete proof for $k=2$ ($d=13$) via a quadratic-form theory and an exact
root-count reduction, and we verify it exhaustively by computer for every
admissible $(n,k)$ with $n\le13$. Along the way we obtain several exact
character-sum reductions of the conjecture, a closed rational form for the
relevant Walsh coefficients, and structural results (Gold and inverse-map
analogues, and a na\"{\i}ve support-disjointness mechanism that we show fails
in general) that clarify what a proof of the general case would require; the
detailed statements are collected at the end of Section~\ref{sec:conj}. The
general case remains open.

Every proof in this paper was obtained by the AI assistant Claude Fable~5 and
has subsequently been formally verified in the Lean theorem prover by
Aristotle (Harmonic); see \cite{KasamiRepo} and the Acknowledgements for provenance details.
\end{abstract}

\maketitle
\tableofcontents

%======================================================================
\section{The conjecture}\label{sec:conj}

Throughout, $\Fn$ is the binary field with $2^n$ elements,
$\Tr(x)=\sum_{i=0}^{n-1}x^{2^i}$ is the absolute trace,
$\ch(x)=(-1)^{\Tr(x)}$ is the canonical additive character, and
\[
H_0=\{x\in\Fn:\Tr(x)=0\},\qquad H_1=\{x\in\Fn:\Tr(x)=1\}
\]
are the trace hyperplane and its complement, $|H_0|=|H_1|=2^{n-1}$.
For a positive integer $k$ with $\gcd(k,n)=1$ we write
\[
q=2^k,\qquad d=d(k)=4^k-2^k+1=q^2-q+1,\qquad F(x)=x^{d}\quad(\text{Kasami function}),
\]
\[
\delta(b)=F(b)+F(b+1)+1,\qquad \Delta=\{\delta(b): b\in\Fn\}.
\]

\begin{conjecture}[Carlet~\cite{Carlet2018}; NSUCRYPTO 2019~\cite{NSUC2019}]\label{conj:main}
Let $k,n$ be positive integers with $\gcd(k,n)=1$. Then for every pair of
distinct nonzero $v_1,v_2\in\Fn$,
\begin{equation}\label{eq:conj}
N(v_1,v_2):=\bigl|\{(x,y,z)\in\Delta^3: v_1x+v_2y+(v_1+v_2)z=0\}\bigr|=2^{2n-3}.
\end{equation}
\end{conjecture}

Since $|\Delta|=2^{n-1}$ (Lemma~\ref{lem:2to1}) and one $\Fn$-linear condition
should cut $\Delta^3$ ($2^{3n-3}$ triples) by a factor $2^n$, the conjectured
value $2^{2n-3}$ is exactly the ``perfect equidistribution'' count; we also show
(Remark~\ref{rem:average}) that $2^{2n-3}$ is the \emph{average} of
$N$ over the admissible pairs, so the conjecture asserts that $N$ is
\emph{constant}, pinned at its mean.

The known supporting facts, listed in the original prompt and formally verified in
the \textsc{Lean} repository \cite{LeanRepo}, are recalled (and in one point
corrected) in Section~\ref{sec:facts}.

\subsection{Summary of the results}\label{sec:conj-summary}

We collect here, as a detailed roadmap, the exact statements proved in the
rest of the paper (all proofs are given in the indicated sections).

\begin{itemize}
\item \textbf{Character reduction.} The count $N(v_1,v_2)$ depends only on
$\rho=v_2/v_1$ and equals $2^{2n-3}+2^{-n}Z(\rho)$ for an explicit character
sum $Z(\rho)$ (Proposition~\ref{prop:red1}); the conjecture is equivalent to
$Z(\rho)=0$ for all $\rho\neq0,1$, equivalently to the vanishing statement
\[
\sum_{t,a\in\Fn}(-1)^{\Tr\left(\psi(t)+\psi(t+a)+\psi(t+\rho a)\right)}=0
\qquad(\rho\neq 0,1),
\]
where $\psi=\MCM^{-1}$ is the inverse of the M\"uller--Cohen--Matthews
permutation (Theorem~\ref{thm:red3}; after a Frobenius transfer one may
assume $k$ odd, in which case $\MCM$ is a permutation --- we also record the
necessary correction that $\MCM$ is \emph{not} a permutation for even $k$,
Lemma~\ref{lem:mcmparity}).
\item \textbf{Easy and monomial cases.} The conjecture holds for
$k\equiv\pm1\pmod n$ (all $n$, Theorem~\ref{thm:k1}); the analogous vanishing
statement holds for all Gold permutations $x^{2^j+1}$ ($n$ odd) and for the
inverse map $x^{-1}$ (all $n$), via an exact root-count mechanism
(Section~\ref{sec:monomial}).
\item \textbf{A closed form.} For odd $n$ we derive a new closed form
$S(a)=\ch(a)\sum_{\Tr(m)=1}\ch\bigl(a\,\Phi(m)\bigr)$,
$\Phi(m)=(m+m^{q}+m^{q^2})\,m^{-(q^2-q+1)}$ ($q=2^k$), exhibiting a second
rational parametrization $\Delta+1=\Phi(\{\Tr=1\})$ (Section~\ref{sec:closedform}).
\item \textbf{A refuted mechanism.} The na\"{\i}ve mechanism (disjointness of
the support of the character transform of $\Delta$ from zero-sum triples)
provably fails for $2\le k\le n-2$ (Section~\ref{sec:termwise}), so any proof
of the general case must exploit genuine cancellation.
\item \textbf{The case $k=2$.} Our main result is a \emph{complete proof of
the conjecture for $k=2$} ($d=13$), hence for all $k\equiv\pm2\pmod n$
(Section~\ref{sec:k2}): we develop a quadratic-form theory in which
$\MCM(x)=x(x+1)^5$ and all character sums are quadratic Gauss sums, prove a
\emph{master correspondence} identifying the radicals of the pencil with two
low-degree families (among them the Bluher--Helleseth--Kholosha family
$z^5+z+u$), deduce by a counting argument that $x(x+1)^5$ is exactly
$2$-to-$1$ with trace-split fibers and that the companion sum $G_1$ vanishes
identically, convert the triple correlation by monomial substitutions into
the exact root count $\#\{\tau:(a\tau^5+b)^3=(\tau^3+1)^5\}=1$ on the
parameter curve $a^6+b^6=1$, and finally pin this count by the explicit root
$\tau_0=a/b$ together with the unconditional average identity --- no
Arf-invariant or sign computations are required. Equivalently: on the
supersingular Fermat cubic $x^3+y^3=1$, the system $(W,T)\in E$,
$W^5+pT^5=q$ has the unique solution $(1/q,p/q)$, a translation by a
rational $3$-torsion point.
\item \textbf{Numerical verification.} We verify the conjecture by computer
\emph{exhaustively over all $(v_1,v_2)$} for every admissible $k$ for all
$n\le 13$ (Section~\ref{sec:numerics}).
\end{itemize}

The general case remains open; Section~\ref{sec:strategy} formulates the
sharpest equivalent forms and outlines strategies towards it.

%======================================================================
\section{The plan}\label{sec:plan}

This document was produced by executing the following plan, formulated at the
outset; we record it verbatim, as it also serves as a map of the paper.

\begin{enumerate}
\item \textbf{Understand and pin down the objects.}
  Read the problem statement, which includes background
  hints and a pointer to the companion \textsc{Lean} repository
  \cite{LeanRepo}; inventory that repository (it contains formally verified
  proofs of the APN and AB properties of the Kasami monomial function) to
  know exactly which facts are proved and under which hypotheses (done; one
  hypothesis --- $k$ odd for the M\"uller--Cohen--Matthews permutation ---
  turns out to be essential and is missing from the informal fact list; see
  Lemma~\ref{lem:mcmparity}).
\item \textbf{Reduce by characters.} Express $N(v_1,v_2)$ through the character
  sums $S(a)=\sum_{t\in\Delta}\ch(at)$; isolate the main term $2^{2n-3}$ and an
  error term $Z(\rho)$ depending only on $\rho=v_2/v_1$
  (Section~\ref{sec:reduction1}).
\item \textbf{Formulate a candidate mechanism.} Prove that the conjecture would
  follow if the support of $S$ contained no three distinct elements summing to
  $0$ (e.g.\ if $\supp S\setminus\{0\}\subseteq H_1$); test this numerically.
  \emph{Outcome: the mechanism holds precisely for $k\equiv\pm1\pmod n$ and
  fails otherwise; the failure is itself informative}
  (Section~\ref{sec:termwise}).
\item \textbf{Structure theory of $\Delta$ and $S$.} Use the exact $2$-to-$1$
  structure of $\delta$ and the identity
  $\delta(b)=\MCM(b^2+b)$ to obtain closed expressions for $S$;
  derive a new single-variable closed form for $S$ via a Gold-substitution
  and telescoping argument (Section~\ref{sec:closedform}).
\item \textbf{Find the sharpest equivalent form of the conjecture.} Convert the
  count over $\Delta^3$ into a full-space statement about the inverse
  permutation $\psi=\MCM^{-1}$; handle even $k$ by a Frobenius transfer
  (Section~\ref{sec:reduction3}).
\item \textbf{Prove what can be proved.} Settle $k\equiv\pm1\pmod n$
  completely; isolate the ``monomial mechanism'' (exact root counts) and prove
  the analogous statements for Gold permutations and the inverse map, to
  identify precisely what is missing in the $\MCM^{-1}$ case
  (Sections~\ref{sec:k1},~\ref{sec:monomial}).
\item \textbf{Verify massively.} Implement independent verification:
  exhaustive over all $(v_1,v_2)$ for all admissible $k$ and $n\le 13$;
  cross-validate every theoretical identity numerically
  (Section~\ref{sec:numerics}).
\item \textbf{Report honestly.} Write up all calculations, including the
  refuted hypothesis from step 3, the proved partial results, the equivalent
  reformulations, the numerical evidence, and a strategy discussion
  (Sections~\ref{sec:status},~\ref{sec:strategy}).
\end{enumerate}

\noindent
\textbf{Summary of outcomes.}
Steps 1--2 and 4--7 succeeded in full; step 3 produced a counterexample to the
candidate mechanism (an instructive negative result). The conjecture is proved
for $k\equiv\pm1\pmod n$, reduced in general to a single clean vanishing
statement for $\MCM^{-1}$, and verified exhaustively for all $n\le13$.
A second campaign (Section~\ref{sec:k2}), executed after the general
reductions, attacked the first open case $k=2$ through quadratic-form theory;
in its final stage the residual claim (``Claim $\mathrm{C}'$'') was proved by
a counting argument, the sign problem was eliminated by monomial
substitutions, and the case $k=2$ of the conjecture was \emph{proved in
full} (Theorem~\ref{thm:k2main}), extending the settled cases to
$k\bmod n\in\{1,2,n-2,n-1\}$. The general case remains open.

%======================================================================
\section{Background facts, with one correction}\label{sec:facts}

Let $T_k(x)=\sum_{i=0}^{k-1}x^{2^i}$ denote the truncated trace and
\begin{equation}\label{eq:mcmdef}
\MCM(x)\;=\;\frac{T_k(x)^{q+1}}{x^{q}}\qquad(q=2^k),
\end{equation}
the M\"uller--Cohen--Matthews (MCM) polynomial \cite{CohenMatthews,Dobbertin99};
as $T_k(x)^{q}$ is divisible by $x^{q}$, \eqref{eq:mcmdef} is a polynomial map
with $\MCM(0)=0$.

\begin{lemma}[telescoping identities]\label{lem:telescope}
For all $x\in\Fn$:
$T_k(x)^2+T_k(x)=x^{q}+x$ and $T_k(x^2+x)=x^{q}+x$.
\end{lemma}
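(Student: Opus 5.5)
Both identities follow from the additivity of the Frobenius map $x\mapsto x^2$ on $\Fn$ together with a telescoping sum over the exponents $2^i$. No hypothesis on $k$ or $n$ is needed; in particular $\gcd(k,n)=1$ plays no role here.

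For the first identity, we expand
\[
T_k(x)^2=\Bigl(\sum_{i=0}^{k-1}x^{2^i}\Bigr)^2=\sum_{i=0}^{k-1}x^{2^{i+1}}=\sum_{i=1}^{k}x^{2^i},
\]
using that squaring is additive in characteristic $2$. Adding $T_k(x)=\sum_{i=0}^{k-1}x^{2^i}$, every term with $1\le i\le k-1$ appears twice and cancels. What remains is $x^{2^k}+x^{2^0}=x^{q}+x$.

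For the second identity, we use that $T_k$ is $\F_2$-linear, again because each map $y\mapsto y^{2^i}$ is additive. Hence $T_k(x^2+x)=T_k(x^2)+T_k(x)=T_k(x)^2+T_k(x)$, since $T_k(x^2)=\sum_i x^{2^{i+1}}=T_k(x)^2$. By the first identity this equals $x^q+x$.

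Neither step is a real obstacle; the only point to state explicitly is that Frobenius is additive, so that $T_k$ is $\F_2$-linear and $T_k(x^2)=T_k(x)^2$. The same telescoping computation also justifies the remark after \eqref{eq:mcmdef}: $T_k(x)^q=\sum_{i=0}^{k-1} x^{q2^i}$ is a sum of powers $x^{q2^i}$ with $q2^i\ge q$, so it is divisible by $x^q$.
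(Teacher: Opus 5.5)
Your proof is correct and follows the paper's own argument. The first identity is the same telescoping of $T_k(x)^2=\sum_{i=1}^{k}x^{2^i}$ against $T_k(x)$. For the second, the paper telescopes $\sum_{i=0}^{k-1}(x^{2^{i+1}}+x^{2^i})$ directly, while you use the $\F_2$-linearity of $T_k$ and $T_k(x^2)=T_k(x)^2$ to reduce it to the first; this is the same computation, packaged slightly differently.
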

\begin{proof}
$T_k(x)^2=\sum_{i=1}^{k}x^{2^i}$, so $T_k(x)^2+T_k(x)=x^{2^k}+x$.
Similarly $T_k(x^2+x)=\sum_{i=0}^{k-1}(x^{2^{i+1}}+x^{2^i})=x^{2^k}+x$.
\end{proof}

\begin{lemma}[the key identity; \textsc{Lean}-verified in \cite{LeanRepo}]\label{lem:identity3}
For all $b\in\Fn$ and all $k,n\ge1$:
\begin{equation}\label{eq:id3}
F(b)+F(b+1)+1=\frac{(b^{q}+b)^{q+1}}{(b^2+b)^{q}}=\MCM(b^2+b).
\end{equation}
\end{lemma}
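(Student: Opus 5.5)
The plan is to prove the two equalities in \eqref{eq:id3} separately. The right-hand equality comes from Lemma~\ref{lem:telescope}. The left-hand equality is a polynomial identity after clearing denominators. The points $b\in\{0,1\}$, where $b^2+b=0$, are checked by hand.

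\emph{Right-hand equality.} Put $x=b^2+b$. The second telescoping identity of Lemma~\ref{lem:telescope} gives $T_k(x)=b^{q}+b$. Substituting this into the definition \eqref{eq:mcmdef} gives
\[
\MCM(b^2+b)=\frac{(b^q+b)^{q+1}}{(b^2+b)^q}.
\]
This holds as an identity of rational functions, and as a polynomial identity after the cancellation noted after \eqref{eq:mcmdef}. At $b\in\{0,1\}$ we have $x=0$, so $\MCM(0)=0$, which is how the middle expression is to be read there.

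\emph{Left-hand equality.} For $b\notin\{0,1\}$ I multiply the claim by $(b^2+b)^q=b^q(b+1)^q$. Since $d=q^2-q+1$, we have $F(b)=b^{q^2+1}/b^q$, and similarly for $b+1$. The claim then becomes
\[
(b+1)^q\,b^{q^2+1}+b^q\,(b+1)^{q^2+1}+b^q(b+1)^q=(b^q+b)^{q+1}.
\]
Write $B=b^q$, so that $b^{q^2}=B^q$, $(b+1)^q=B+1$ and $(b+1)^{q^2}=B^q+1$.

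The left side becomes $(B+1)B^qb+B(B^q+1)(b+1)+B(B+1)$. Expanding in characteristic $2$, the terms $B^{q+1}b$ cancel in pairs and so do the terms $B$, leaving $B^{q+1}+B^qb+B^2+Bb$.

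The right side is $(B+b)^q(B+b)=(B^q+B)(B+b)$, using $b^q=B$. It expands to the same four monomials, which proves the equality.

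\emph{Degenerate points.} For $b\in\{0,1\}$ the left side is $F(0)+F(1)+1=0+1+1=0$, which agrees with $\MCM(0)=0$. The identity then holds for all $b$, with no assumption on $k,n$ beyond $k\ge1$.

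The only real obstacle is bookkeeping. One must use the substitutions $b^{q^2}=(b^q)^q$ and $(b+1)^{q^2+1}=(B^q+1)(b+1)$ so that everything is expressed in the two quantities $b$ and $B$, after which the cancellation is mechanical. One must also be careful about interpreting the quotient at the two points $b=0,1$ where the denominator vanishes.
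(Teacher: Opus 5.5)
Your proof is correct and is essentially the paper's argument. Both handle $b\in\{0,1\}$ separately, clear the denominator $(b^2+b)^q$, verify the resulting polynomial identity by direct expansion, and get the second equality from Lemma~\ref{lem:telescope}; the only difference is that you organize the expansion through $B=b^q$, where the paper writes out monomials in $b$.
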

\begin{proof}
The second equality is Lemma~\ref{lem:telescope}.
For the first, put $u=b^{q}+b$ and $s=b^2+b=b(b+1)$. If $b\in\{0,1\}$ both sides
are $0+1+1=0=\MCM(0)$. Otherwise $s\neq0$ and it suffices to prove
$u^{q+1}=\bigl(F(b)+F(b+1)+1\bigr)s^{q}$. Expand both sides ($d=q^2-q+1$):
\begin{align*}
u^{q+1}&=(b^{q^2}+b^{q})(b^{q}+b)=b^{q^2+q}+b^{q^2+1}+b^{2q}+b^{q+1};\\
F(b)\,s^{q}&=b^{q^2-q+1}\,(b^{2q}+b^{q})=b^{q^2+q+1}+b^{q^2+1};\\
F(b+1)\,s^{q}&=(b+1)^{q^2-q+1}\,b^{q}(b+1)^{q}
=b^{q}(b+1)^{q^2+1}=b^{q}(b^{q^2}+1)(b+1)\\
&=b^{q^2+q+1}+b^{q^2+q}+b^{q+1}+b^{q};\\
1\cdot s^{q}&=b^{2q}+b^{q}.
\end{align*}
Adding the last three lines (char.\ $2$) gives
$b^{q^2+1}+b^{q^2+q}+b^{q+1}+b^{2q}=u^{q+1}$.
\end{proof}

\begin{lemma}[exact $2$-to-$1$ structure]\label{lem:2to1}
Let $\gcd(k,n)=1$. Every fiber of $\delta(b)=F(b)+F(b+1)+1$ is exactly a pair
$\{b,b+1\}$; consequently $|\Delta|=2^{n-1}$, $0=\delta(0)\in\Delta$, and for
every $a\in\Fn$,
\begin{equation}\label{eq:2to1}
\sum_{b\in\Fn}\ch\bigl(a\,\delta(b)\bigr)=2S(a),\qquad
S(a):=\sum_{t\in\Delta}\ch(at).
\end{equation}
Moreover $b\mapsto b^2+b$ is exactly $2$-to-$1$ from $\Fn$ onto $H_0$
(fibers $\{b,b+1\}$), so by \eqref{eq:id3},
$\Delta=\MCM(H_0)$ and $\MCM$ restricted to $H_0$ is injective.
\end{lemma}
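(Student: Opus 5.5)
The plan is to derive everything from the APN property of the Kasami function, which holds for $\gcd(k,n)=1$ and is among the facts formally verified in \cite{LeanRepo}; I take it as given. APN means that for every nonzero $\alpha$ and every $c$, the equation $F(x)+F(x+\alpha)=c$ has at most two solutions $x$.

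\textbf{Fibers of $\delta$.} Apply APN with $\alpha=1$. The solution set of $F(x)+F(x+1)=c$ is invariant under $x\mapsto x+1$, so it is a union of pairs $\{b,b+1\}$. By APN it therefore contains at most one such pair. Now $\delta(b)=\delta(b')$ is equivalent to $F(b)+F(b+1)=F(b')+F(b'+1)$. Hence every nonempty fiber of $\delta$ is exactly one pair $\{b,b+1\}$. Since $\Fn$ splits into $2^{n-1}$ such pairs, the image has size $|\Delta|=2^{n-1}$. Also $\delta(0)=F(0)+F(1)+1=0+1+1=0$, so $0\in\Delta$.

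\textbf{The character-sum identity \eqref{eq:2to1}.} Group the sum over $b\in\Fn$ by fibers. Each $t\in\Delta$ is hit by exactly two values of $b$, so
\[
\sum_{b\in\Fn}\ch\bigl(a\,\delta(b)\bigr)=2\sum_{t\in\Delta}\ch(at)=2S(a).
\]

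\textbf{The map $b\mapsto b^2+b$.} This map is $\F_2$-linear with kernel $\{0,1\}$, so its image has size $2^{n-1}$ and its fibers are the cosets $\{b,b+1\}$. Its image lies in $H_0$, because $\Tr(b^2)=\Tr(b)$ gives $\Tr(b^2+b)=0$. Comparing cardinalities, the image equals $H_0$.

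\textbf{$\Delta=\MCM(H_0)$ and injectivity.} By \eqref{eq:id3}, $\Delta=\{\MCM(b^2+b):b\in\Fn\}=\MCM(H_0)$. For injectivity, suppose $s,s'\in H_0$ satisfy $\MCM(s)=\MCM(s')$. Write $s=b^2+b$ and $s'=b'^2+b'$. Then $\delta(b)=\delta(b')$, so $b'\in\{b,b+1\}$ by the fiber statement, and therefore $s'=s$. Alternatively, $\MCM$ maps the $2^{n-1}$-element set $H_0$ onto the $2^{n-1}$-element set $\Delta$, so it is injective there by counting.

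The only nontrivial input is the APN property, which is imported from earlier results. The remaining steps are bookkeeping, so I do not expect any real obstacle.
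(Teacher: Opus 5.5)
Your proof is correct and follows the paper's proof. Both use APN in direction $1$ to force the fibers of $\delta$ to be exactly the pairs $\{b,b+1\}$, identify $b\mapsto b^2+b$ as a $2$-to-$1$ map onto $H_0$ by counting, and read off $\Delta=\MCM(H_0)$ from \eqref{eq:id3}. Your extra fiber-based argument for injectivity of $\MCM$ on $H_0$ is valid, and your counting alternative $|\MCM(H_0)|=|\Delta|=|H_0|$ is exactly the paper's argument.
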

\begin{proof}
$\delta(b)=\delta(b+1)$ is clear. The Kasami function is APN for
$\gcd(k,n)=1$ \cite{Kasami71,Dobbertin99} (formally verified without any parity
hypotheses in \cite{LeanRepo}), i.e.\ $F(x)+F(x+1)=c$ has at most two solutions
for every $c$; hence the fibers are exactly the pairs $\{b,b+1\}$.
The map $b\mapsto b^2+b$ is $2$-to-$1$ with image contained in $H_0$
(as $\Tr(b^2)=\Tr(b)$) and image size $2^{n-1}=|H_0|$.
Injectivity of $\MCM$ on $H_0$ follows since
$|\MCM(H_0)|=|\Delta|=2^{n-1}=|H_0|$.
\end{proof}

The next elementary observation corrects the fact list of the original prompt,
which asserts that $\MCM$ is a permutation polynomial whenever $\gcd(k,n)=1$.

\begin{lemma}[parity caveat for the MCM permutation]\label{lem:mcmparity}
$T_k(1)=k\bmod 2$ and hence $\MCM(1)=k\bmod 2$. Consequently, if $k$ is even
then $\MCM(1)=0=\MCM(0)$ and $\MCM$ is \emph{not} a permutation of $\Fn$.
If $k$ is odd (and $\gcd(k,n)=1$, $1\le k<n$), then $\MCM$ \emph{is} a
permutation of $\Fn$, for every parity of $n$
\cite{CohenMatthews,Dobbertin99}; this is the statement formally verified in
\cite{LeanRepo} (\texttt{mcm\_isPermutation}, hypotheses $0<k<n$,
$\gcd(k,n)=1$, $k$ odd, no hypothesis on the parity of $n$).
Note that if $k$ is even and $\gcd(k,n)=1$ then $n$ is odd, so $\Tr(1)=1$ and
$1\notin H_0$: the collision $\MCM(0)=\MCM(1)$ does not conflict with the
injectivity of $\MCM$ on $H_0$ (Lemma~\ref{lem:2to1}).
\end{lemma}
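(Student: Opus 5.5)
The plan is to evaluate $T_k(1)$ directly, then compute $\MCM(1)$ from the closed form of Lemma~\ref{lem:identity3}, and finally dispose of the remaining assertions one at a time.

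Since $1^{2^i}=1$ for every $i$, we get $T_k(1)=\sum_{i=0}^{k-1}1=k\cdot 1$, which equals $k\bmod 2$ in characteristic $2$. Substituting into \eqref{eq:mcmdef} with $x=1$ gives $\MCM(1)=T_k(1)^{q+1}/1^{q}=(k\bmod 2)^{q+1}=k\bmod 2$, because $0^{q+1}=0$ and $1^{q+1}=1$. Here it matters that \eqref{eq:mcmdef} is a genuine polynomial map, as noted after its definition. Evaluating it at $x=1$ is therefore legitimate and involves no division issue.

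For $k$ even we obtain $\MCM(1)=0=\MCM(0)$. Since $0\neq 1$, the map is not injective on the finite set $\Fn$, so it is not a permutation.

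For $k$ odd with $\gcd(k,n)=1$ and $1\le k<n$, I would not reprove the permutation property. I would cite the Cohen--Matthews/Dobbertin theorem \cite{CohenMatthews,Dobbertin99} and its formal verification \texttt{mcm\_isPermutation} in \cite{LeanRepo}, noting that the listed hypotheses impose no parity condition on $n$. This citation is the only non-elementary ingredient, and it is the place where an obstacle could arise. If a self-contained proof were wanted, I would try to use Lemma~\ref{lem:2to1}, which gives injectivity on $H_0$, and then handle $H_1$ separately. I expect that second step to be the genuinely hard part.

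The final remark needs only a parity argument. If $k$ is even, then $\gcd(k,n)=1$ forces $n$ to be odd. Hence $\Tr(1)=n\bmod 2=1$, so $1\in H_1$ and $1\notin H_0$. The collision $\MCM(0)=\MCM(1)$ therefore involves only one point of $H_0$, namely $0$, and is consistent with the injectivity of $\MCM$ on $H_0$ from Lemma~\ref{lem:2to1}.
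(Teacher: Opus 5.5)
Your proposal is correct and follows the paper's proof: evaluate $T_k(1)=k\bmod 2$ directly, read off $\MCM(1)=T_k(1)^{q+1}$ from the definition \eqref{eq:mcmdef}, and cite Cohen--Matthews/Dobbertin and the \textsc{Lean} formalization for the permutation property when $k$ is odd. Your explicit parity argument for the closing note ($k$ even forces $n$ odd, so $\Tr(1)=1$ and $1\notin H_0$) spells out what the paper leaves inline in the statement. One small slip: your opening sentence says you will compute $\MCM(1)$ from Lemma~\ref{lem:identity3}, but you actually (and correctly) use \eqref{eq:mcmdef}.
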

\begin{proof}
$T_k(1)=\underbrace{1+\cdots+1}_{k}=k\bmod2$, and
$\MCM(1)=T_k(1)^{q+1}/1$. The permutation statement for odd $k$ is
\cite{CohenMatthews}; see also \cite{Dobbertin99} and \cite{LeanRepo}.
\end{proof}

Finally we record the Frobenius equivalence between $k$ and $n-k$, which we use
to reduce to odd $k$. Note that since $2^{k}$ only enters through its residue
class, $d(k)$ depends only on $k\bmod n$ as an exponent on $\Fn^{\ast}$, so we
may and do assume $1\le k\le n-1$.

\begin{lemma}[Frobenius transfer $k\leftrightarrow n-k$]\label{lem:frobenius}
Let $\gcd(k,n)=1$, $1\le k\le n-1$. Then
$2^{2k}\,d(n-k)\equiv d(k)\pmod{2^n-1}$; hence
$x^{d(k)}=\bigl(x^{d(n-k)}\bigr)^{2^{2k}}$ for all $x\in\Fn$,
\[
\delta_k(b)=\bigl(\delta_{n-k}(b)\bigr)^{2^{2k}},\qquad
\Delta_k=\bigl(\Delta_{n-k}\bigr)^{2^{2k}}\quad(\text{setwise Frobenius image}),
\]
and Conjecture~\ref{conj:main} holds for $(n,k)$ if and only if it holds for
$(n,n-k)$. In particular, since $\gcd(k,n)=1$ forces $k$ or $n-k$ to be odd,
it suffices to prove the conjecture for odd $k$.
\end{lemma}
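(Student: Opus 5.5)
The plan is to verify the exponent congruence first and then push it through the definitions. Write $q=2^k$ and $q'=2^{n-k}$, so that $qq'=2^n\equiv1\pmod{2^n-1}$. Then
\[
2^{2k}d(n-k)=q^2\bigl(q'^2-q'+1\bigr)=(qq')^2-q(qq')+q^2\equiv 1-q+q^2=d(k)\pmod{2^n-1}.
\]
Since $x^{2^n-1}=1$ for $x\neq0$, this gives $x^{d(k)}=(x^{d(n-k)})^{2^{2k}}$ for all $x\in\Fn^\ast$, and trivially for $x=0$. Write $\sigma(x)=x^{2^{2k}}$; it is a field automorphism of $\Fn$, so it is additive and fixes $1$. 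Hence
\[
\delta_k(b)=\sigma(F_{n-k}(b))+\sigma(F_{n-k}(b+1))+\sigma(1)=\sigma(\delta_{n-k}(b)).
\]
Taking images over all $b$ gives $\Delta_k=\sigma(\Delta_{n-k})$.

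For the equivalence of the conjecture, we transport counts. For distinct nonzero $v_1,v_2$, the map $(x,y,z)\mapsto(\sigma x,\sigma y,\sigma z)$ is a bijection from $\Delta_{n-k}^3$ onto $\Delta_k^3$. Applying $\sigma$ to the relation gives
\[
v_1x+v_2y+(v_1+v_2)z=0\iff \sigma(v_1)\sigma x+\sigma(v_2)\sigma y+(\sigma v_1+\sigma v_2)\sigma z=0.
\]
Therefore $N_{n-k}(v_1,v_2)=N_k(\sigma v_1,\sigma v_2)$. As $(v_1,v_2)$ runs over the pairs of distinct nonzero elements, so does $(\sigma v_1,\sigma v_2)$, because $\sigma$ is a bijection fixing $0$. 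Constancy at $2^{2n-3}$ for one parameter is then equivalent to constancy for the other. Note also that $\gcd(n-k,n)=\gcd(k,n)=1$, so $(n,n-k)$ is admissible whenever $(n,k)$ is.

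For the final clause: if $k$ and $n-k$ were both even, then $n$ would be even and $2\mid\gcd(k,n)$, a contradiction. So one of $k,n-k$ is odd, and the previous step lets us replace $k$ by that one.

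There is no genuine obstacle here. The only step needing care is the exponent bookkeeping, namely using $qq'\equiv1$ rather than trying to invert $q$ explicitly. One should also remember that $x=0$ is handled separately, since the congruence only holds modulo $2^n-1$.
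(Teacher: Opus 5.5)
Your proof is correct and follows the paper's argument step for step. You prove the exponent congruence from $2^{n}\equiv 1 \pmod{2^n-1}$, apply the automorphism $x\mapsto x^{2^{2k}}$ to $\delta$ and $\Delta$, transport solution counts through the induced bijection on admissible pairs $(v_1,v_2)$, and use the same parity argument for the final clause.
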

\begin{proof}
With $q=2^k$: $2^{2k}d(n-k)=2^{2k}(2^{2n-2k}-2^{n-k}+1)
=2^{2n}-2^{n+k}+2^{2k}\equiv 1-2^{k}+2^{2k}=d(k) \pmod{2^n-1}$,
because $2^{2n}\equiv1$ and $2^{n+k}\equiv2^k$.
Applying the field automorphism $\pi(x)=x^{2^{2k}}$:
$\delta_k(b)=\pi(\delta_{n-k}(b))$ since $\pi$ is additive and fixes $1$.
Hence $\Delta_k=\pi(\Delta_{n-k})$. If \eqref{eq:conj} holds for $\Delta_{n-k}$
and all pairs, then for any distinct nonzero $v_1,v_2$,
\[
\{(x,y,z)\in\Delta_k^3: v_1x+v_2y+(v_1{+}v_2)z=0\}
=\pi^{\times3}\Bigl(\{(x',y',z')\in\Delta_{n-k}^3:
v_1'x'+v_2'y'+(v_1'{+}v_2')z'=0\}\Bigr)
\]
with $v_i'=\pi^{-1}(v_i)$ (a bijection of admissible pairs), so the counts agree.
If $k$ and $n-k$ were both even, $2\mid\gcd(k,n(-k))$ would contradict
$\gcd(k,n)=1$.
\end{proof}

%======================================================================
\section{Reduction I: characters, and dependence on a single parameter}
\label{sec:reduction1}

\begin{proposition}\label{prop:red1}
Let $v_1,v_2$ be distinct and nonzero, $v_3=v_1+v_2$, and $\rho=v_2/v_1$
(so $\rho\notin\{0,1\}$, and $\sigma:=1+\rho=v_3/v_1$). Then
\begin{equation}\label{eq:red1}
N(v_1,v_2)\;=\;2^{2n-3}\;+\;2^{-n}\,Z(\rho),
\qquad
Z(\rho):=\sum_{\lambda\in\Fn^{\ast}}S(\lambda)\,S(\lambda\rho)\,S(\lambda\sigma),
\end{equation}
where $S$ is as in \eqref{eq:2to1}. In particular $N(v_1,v_2)=N(1,\rho)$
depends only on $\rho$, and Conjecture~\ref{conj:main} is equivalent to
\begin{equation}\label{eq:Zzero}
Z(\rho)=0\qquad\text{for all } \rho\in\Fn\setminus\{0,1\}.
\end{equation}
Moreover $N(1,\rho)$ is invariant under the $S_3$-action generated by
$\rho\mapsto1/\rho$ and $\rho\mapsto1+\rho$.
\end{proposition}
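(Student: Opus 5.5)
We plan to prove the formula by expanding the linear condition with additive characters. For $x,y,z\in\Delta$ the indicator of $v_1x+v_2y+v_3z=0$ equals $2^{-n}\sum_{\lambda\in\Fn}\ch\bigl(\lambda(v_1x+v_2y+v_3z)\bigr)$. Summing over $\Delta^3$ and factoring the sum over $x,y,z$ gives
\[
N(v_1,v_2)=2^{-n}\sum_{\lambda\in\Fn}S(\lambda v_1)\,S(\lambda v_2)\,S(\lambda v_3),
\]
with $S$ as in \eqref{eq:2to1}. The term $\lambda=0$ contributes $2^{-n}|\Delta|^3=2^{-n}2^{3(n-1)}=2^{2n-3}$, using $|\Delta|=2^{n-1}$ from Lemma~\ref{lem:2to1}.

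For the remaining terms we substitute $\lambda\mapsto\lambda/v_1$, which is a bijection of $\Fn^{\ast}$ because $v_1\neq0$. Since $v_2/v_1=\rho$ and $v_3/v_1=1+\rho=\sigma$, the sum over $\lambda\neq0$ becomes $\sum_{\lambda\neq0}S(\lambda)S(\lambda\rho)S(\lambda\sigma)=Z(\rho)$. This proves \eqref{eq:red1}. Because the right-hand side involves only $\rho$, we get $N(v_1,v_2)=N(1,\rho)$. As $(v_1,v_2)$ ranges over distinct nonzero pairs, $\rho$ ranges over exactly $\Fn\setminus\{0,1\}$. Hence the conjecture, which asks for $N=2^{2n-3}$, is equivalent to \eqref{eq:Zzero}.

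For the $S_3$-invariance we use two symmetries of the defining equation.
\begin{itemize}
\item The equation $v_1x+v_2y+v_3z=0$ is unchanged under any permutation of the coefficient triple $(v_1,v_2,v_3)$ together with the same permutation of the variables. Since $\Delta^3$ is stable under coordinate permutations, $N$ is symmetric in $(v_1,v_2,v_3)$. This triple also satisfies $v_3=v_1+v_2$, equivalently $v_1=v_2+v_3$ and $v_2=v_1+v_3$, so every permutation of it is again admissible.
\item Swapping $v_1$ and $v_2$ sends $\rho$ to $1/\rho$.
\item Swapping $v_2$ and $v_3$ sends $\rho=v_2/v_1$ to $v_3/v_1=1+\rho$.
\end{itemize}
Therefore $N(1,\rho)=N(1,1/\rho)=N(1,1+\rho)$. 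The same follows directly from the formula for $Z(\rho)$: the product $S(\lambda)S(\lambda\rho)S(\lambda\sigma)$ is symmetric in the three scalings, and rescaling $\lambda$ permutes the terms of the sum.

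No step is a genuine obstacle. The only points needing care are that $\lambda\mapsto\lambda/v_1$ is a bijection of $\Fn^{\ast}$ and that permuting the coefficients preserves the relation $v_3=v_1+v_2$ in characteristic $2$.
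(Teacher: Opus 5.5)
Your proposal is correct and takes essentially the same route as the paper. Both arguments detect the linear condition with additive characters, read off the $\lambda=0$ term using $|\Delta|=2^{n-1}$, reindex by $\lambda\mapsto\lambda/v_1$, and obtain the $S_3$-invariance from simultaneously permuting $(v_1,v_2,v_3)$ and $(x,y,z)$. Your extra checks are sound additions: that permuted coefficient triples remain admissible in characteristic $2$, and that the invariance can be read directly off $Z(\rho)$ by rescaling $\lambda$.
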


\begin{proof}
By orthogonality of additive characters,
$2^{-n}\sum_{\lambda\in\Fn}\ch(\lambda w)=[w=0]$, so
\[
N=\sum_{x,y,z\in\Delta}2^{-n}\sum_{\lambda\in\Fn}
\ch\bigl(\lambda(v_1x+v_2y+v_3z)\bigr)
=2^{-n}\sum_{\lambda\in\Fn}S(\lambda v_1)S(\lambda v_2)S(\lambda v_3).
\]
The term $\lambda=0$ contributes $|\Delta|^3 2^{-n}=2^{3(n-1)-n}=2^{2n-3}$;
reindexing $\lambda\mapsto\lambda/v_1$ in the remaining sum gives
\eqref{eq:red1}. The equation $v_1x+v_2y+v_3z=0$ with $v_1+v_2+v_3=0$ is
symmetric under simultaneously permuting $(v_1,v_2,v_3)$ and $(x,y,z)$, which
induces the stated $S_3$-action on $\rho$.
\end{proof}

\begin{remark}[the conjecture pins $N$ at its average]\label{rem:average}
Fix $v_1=1$. For $y\ne z$ the map $\rho\mapsto x=\rho y+(1+\rho)z$
is a bijection $\Fn\to\Fn$ sending $0\mapsto z$ and $1\mapsto y$; for $y=z$ it
is constant $=z$. Counting triples $(x,y,z)\in\Delta^3$ with
$x=\rho y+(1+\rho)z$ over all $\rho\notin\{0,1\}$ therefore gives
\[
\sum_{\rho\ne0,1}N(1,\rho)
=(2^n-2)\,2^{n-1}\;+\;\bigl(2^{2n-2}-2^{n-1}\bigr)\bigl(2^{n-1}-2\bigr),
\]
and dividing by $2^n-2$ yields exactly $2^{2n-3}$. Thus
$\operatorname{avg}_{\rho}N(1,\rho)=2^{2n-3}$ unconditionally, and
Conjecture~\ref{conj:main} states that $N(1,\cdot)$ is \emph{constant}.
\end{remark}

%======================================================================
\section{Reduction II: a balancedness statement for the derivative}
\label{sec:reduction2}

Write $D(b)=F(b)+F(b+1)=\delta(b)+1$ for the (normalized) derivative of the
Kasami function in direction $1$.

\begin{proposition}\label{prop:red2}
For $\rho\notin\{0,1\}$, $\sigma=1+\rho$,
\begin{equation}\label{eq:red2}
8\,Z(\rho)\;=\;2^{n}\,C(\rho)-2^{3n},\qquad
C(\rho):=\bigl|\{(b,c,e)\in\Fn^3:\ D(b)+\rho D(c)+\sigma D(e)=0\}\bigr|.
\end{equation}
Hence Conjecture~\ref{conj:main} is equivalent to:
\emph{$C(\rho)=2^{2n}$ for all $\rho\notin\{0,1\}$}, i.e.\ the map
$(b,c,e)\mapsto D(b)+\rho D(c)+\sigma D(e)$ attains $0$ exactly as often as a
balanced map would.
\end{proposition}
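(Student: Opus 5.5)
We count $C(\rho)$ directly by orthogonality of additive characters, exactly as in the proof of Proposition~\ref{prop:red1}, and then compare with \eqref{eq:red1}. Writing $[w=0]=2^{-n}\sum_{\lambda}\ch(\lambda w)$ gives
\[
C(\rho)=2^{-n}\sum_{\lambda\in\Fn}\Bigl(\sum_{b}\ch(\lambda D(b))\Bigr)\Bigl(\sum_{c}\ch(\lambda\rho D(c))\Bigr)\Bigl(\sum_{e}\ch(\lambda\sigma D(e))\Bigr).
\]
Since $D(b)=\delta(b)+1$, each inner sum factors as $\sum_b\ch(\mu D(b))=\ch(\mu)\sum_b\ch(\mu\delta(b))=2\ch(\mu)S(\mu)$ by \eqref{eq:2to1} (Lemma~\ref{lem:2to1}). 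The product of the three phases is $\ch(\lambda)\ch(\lambda\rho)\ch(\lambda\sigma)=\ch(\lambda(1+\rho+\sigma))=\ch(0)=1$, because $\sigma=1+\rho$. Hence
\[
C(\rho)=2^{-n}\cdot 8\sum_{\lambda\in\Fn}S(\lambda)S(\lambda\rho)S(\lambda\sigma).
\]

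The $\lambda=0$ term is $8\cdot(2^{n-1})^3=2^{3n}$, using $|\Delta|=2^{n-1}$. The remaining terms form exactly $8Z(\rho)$ by the definition in \eqref{eq:red1}. Thus $2^nC(\rho)=2^{3n}+8Z(\rho)$, which is \eqref{eq:red2}.

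The equivalence then follows from Proposition~\ref{prop:red1}: the conjecture is $Z(\rho)=0$ for all $\rho\ne0,1$, and this holds iff $C(\rho)=2^{3n}/2^n=2^{2n}$. That value is the balanced count, since the triple space has $2^{3n}$ points and the target $\Fn$ has $2^n$ elements.

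The only step needing care is the phase cancellation. It relies on the linear coefficients $1,\rho,\sigma$ summing to zero, which is precisely why the derivative $D$ (rather than $\delta$) appears without any extra twist. Everything else is bookkeeping of the factor $2$ from the $2$-to-$1$ structure, cubed to give $8$.
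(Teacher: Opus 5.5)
Your proof is correct and is essentially the paper's argument: both rest on character orthogonality, the $2$-to-$1$ identity $2S(a)=\sum_b\ch(a\delta(b))$, and the fact that $1+\rho+\sigma=0$ makes $\delta(b)+\rho\delta(c)+\sigma\delta(e)=D(b)+\rho D(c)+\sigma D(e)$. The paper runs it in the opposite direction, expanding $8Z(\rho)$ into a sum over $(b,c,e)$ and applying that identity directly, whereas you expand $C(\rho)$ and see the same identity as the cancellation of the phases $\ch(\lambda)\ch(\lambda\rho)\ch(\lambda\sigma)$.
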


\begin{proof}
By \eqref{eq:2to1}, $2S(a)=\sum_b\ch(a\delta(b))$, so
\[
8Z(\rho)=\sum_{\lambda\ne0}\ \sum_{b,c,e}
\ch\Bigl(\lambda\bigl[\delta(b)+\rho\,\delta(c)+\sigma\,\delta(e)\bigr]\Bigr)
=\sum_{b,c,e}\Bigl(2^n\bigl[\delta(b)+\rho\delta(c)+\sigma\delta(e)=0\bigr]-1\Bigr).
\]
Finally $\delta=D+1$ and $1+\rho+\sigma=0$ give
$\delta(b)+\rho\delta(c)+\sigma\delta(e)=D(b)+\rho D(c)+\sigma D(e)$.
\end{proof}

%======================================================================
\section{Reduction III: an exact statement about $\MCM^{-1}$}
\label{sec:reduction3}

By Lemma~\ref{lem:frobenius} we may assume $k$ odd; then
$\MCM$ is a permutation of $\Fn$ (Lemma~\ref{lem:mcmparity}) --- for every
parity of $n$ --- and we write $\psi=\MCM^{-1}$.

\begin{theorem}\label{thm:red3}
Let $\gcd(k,n)=1$ with $k$ odd, and let $\psi=\MCM^{-1}$. For
$\rho\notin\{0,1\}$, $\sigma=1+\rho$, define
\begin{equation}\label{eq:A111}
A(\rho)\;:=\!\!\sum_{\substack{(t_1,t_2,t_3)\in\Fn^3\\ t_1+\rho t_2+\sigma t_3=0}}\!\!
\ch\bigl(\psi(t_1)+\psi(t_2)+\psi(t_3)\bigr)
\;=\;\sum_{t,a\in\Fn}\ch\bigl(\psi(t+\rho a)+\psi(t+a)+\psi(t)\bigr).
\end{equation}
Then $8\,Z(\rho)=2^{n}A(\rho)$, and Conjecture~\ref{conj:main} is equivalent to
\begin{equation}\label{eq:A111zero}
A(\rho)=0\qquad\text{for all }\rho\in\Fn\setminus\{0,1\}.
\end{equation}
Equivalently, in Fourier form, with
$W_\psi(1,\mu)=\sum_x\ch(\psi(x)+\mu x)=W_{\MCM}(\mu,1)=2S(\mu)$:
\[
A(\rho)=2^{-n}\sum_{\mu\in\Fn}W_\psi(1,\mu)\,W_\psi(1,\mu\rho)\,W_\psi(1,\mu\sigma).
\]
\end{theorem}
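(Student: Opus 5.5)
The plan is to start from Proposition~\ref{prop:red1}, which writes $Z(\rho)$ in terms of $S$, and to express $S$ through $\psi$ by using that $\MCM$ is a permutation. By Lemma~\ref{lem:2to1}, $\Delta=\MCM(H_0)$ and $\MCM|_{H_0}$ is injective. Since $\MCM$ is a bijection of $\Fn$ for odd $k$ (Lemma~\ref{lem:mcmparity}), we can write $\Delta=\{t\in\Fn:\Tr(\psi(t))=0\}$. So the indicator of $\Delta$ is $\frac12\bigl(1+\ch(\psi(t))\bigr)$, and
\[
S(a)=\tfrac12\sum_{t}\ch(at)+\tfrac12\sum_t\ch(\psi(t)+at)=2^{n-1}[a=0]+\tfrac12 W_\psi(1,a).
\]
For $a\neq0$ this gives $2S(a)=W_\psi(1,a)$. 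Substituting $x=\MCM(y)$ shows $W_\psi(1,\mu)=\sum_y\ch(y+\mu\MCM(y))=W_{\MCM}(\mu,1)$, which is the stated identification.

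Next, insert this into $Z(\rho)=\sum_{\lambda\ne0}S(\lambda)S(\lambda\rho)S(\lambda\sigma)$. Because $\rho,\sigma\neq0$, all three arguments are nonzero, so
\[
8Z(\rho)=\sum_{\mu\ne0}W_\psi(1,\mu)W_\psi(1,\mu\rho)W_\psi(1,\mu\sigma).
\]
The full sum over $\mu\in\Fn$ also contains the $\mu=0$ term $W_\psi(1,0)^3=\bigl(\sum_x\ch(\psi(x))\bigr)^3$. This equals $\bigl(\sum_y\ch(y)\bigr)^3=0$, since $\psi$ is a bijection and $\Tr$ is balanced. Hence $8Z(\rho)=\sum_{\mu\in\Fn}W_\psi(1,\mu)W_\psi(1,\mu\rho)W_\psi(1,\mu\sigma)$. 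This vanishing $\mu=0$ term is the one point where bijectivity on all of $\Fn$ (hence $k$ odd) is really used. I expect it, together with keeping the $a=0$ delta term correctly separated, to be the main place where care is needed.

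Then expand the triple product:
\[
\sum_{\mu}\sum_{t_1,t_2,t_3}\ch\bigl(\psi(t_1)+\psi(t_2)+\psi(t_3)+\mu(t_1+\rho t_2+\sigma t_3)\bigr).
\]
Orthogonality in $\mu$ produces $2^n$ times the constrained sum, giving $8Z(\rho)=2^nA(\rho)$ with $A$ as in the first expression of \eqref{eq:A111}. The Fourier form of $A$ then follows by dividing by $2^n$.

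For the second expression of $A$, parametrize the solutions of $t_1+\rho t_2+\sigma t_3=0$. Since $1+\rho+\sigma=0$, we can set $t_3=t$ and $t_2=t+a$, which forces $t_1=\rho(t+a)+\sigma t=t+\rho a$. This is a bijection $(t,a)\mapsto(t_1,t_2,t_3)$ onto the solution set, because $t_3$ and $t_2-t_3$ determine everything. Finally, the equivalence of \eqref{eq:A111zero} with the conjecture follows from $8Z=2^nA$ together with \eqref{eq:Zzero}.
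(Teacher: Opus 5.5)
Your argument is correct, but it reaches $8Z(\rho)=2^nA(\rho)$ by a different route from the paper's. The paper works on the counting side. It starts from Proposition~\ref{prop:red2} and lifts $C(\rho)$ to a count $C_H(\rho)$ over $H_0^3$ via $\delta(b)=\MCM(b^2+b)$. It then expands $\mathbf 1_{H_0}=\tfrac12(1+\ch)$ in all three coordinates and substitutes $t_i=\MCM(s_i)$. Finally it checks that the six mixed sums $A_\varepsilon$ vanish, each by balancedness of $\psi$ along a slice of the plane, which gives $C(\rho)=2^{2n}+A(\rho)$; the Fourier form is appended at the end.

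You work instead on the Fourier side of Proposition~\ref{prop:red1}. Writing $\mathbf 1_\Delta=\tfrac12(1+\ch\circ\psi)$ turns $S$ into Walsh coefficients of $\psi$. The constant half of the indicator only affects $S(0)$, which never occurs in $Z(\rho)$. The single correction needed to complete the $\mu$-sum is $W_\psi(1,0)=0$. So the paper's six mixed terms collapse into one observation, Proposition~\ref{prop:red2} is bypassed, and the Fourier form becomes the central step rather than a corollary.

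Your treatment is also more precise at $\mu=0$. The identity $W_\psi(1,\mu)=2S(\mu)$ quoted in the statement holds only for $\mu\neq0$; at $\mu=0$ the two sides are $0$ and $2^n$. You make explicit why the Fourier form summed over all of $\Fn$ is nevertheless correct. What the paper's route buys in exchange is the intermediate identity $C(\rho)=2^{2n}+A(\rho)$. It reads $A(\rho)$ directly as the deviation of the zero count of $D(b)+\rho D(c)+\sigma D(e)$ from its balanced value.

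One small correction to your commentary: the vanishing of the $\mu=0$ term is not the only place where bijectivity of $\MCM$ matters. It is equally needed for $\Delta=\{t:\Tr(\psi(t))=0\}$, which requires $\MCM(H_1)\cap\MCM(H_0)=\emptyset$. For $k=2$, by contrast, $\MCM(H_1)=\MCM(H_0)$ by Theorem~\ref{claim:Cprime}.
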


\begin{proof}
First, the two expressions in \eqref{eq:A111} agree: the plane
$P_\rho=\{t_1+\rho t_2+\sigma t_3=0\}$ contains the diagonal direction
$(1,1,1)$ (as $1+\rho+\sigma=0$), and
$(t,a)\mapsto(t+\rho a,\,t+a,\,t)$ is a bijection of $\Fn^2$ onto $P_\rho$:
given $(t_2,t_3)\in\Fn^2$ take $t=t_3$, $a=t_2+t_3$, and then
$\rho t_2+\sigma t_3=(\rho+\sigma)t+\rho a=t+\rho a$.

Next, by Lemma~\ref{lem:2to1} and \eqref{eq:id3},
each $b$ corresponds to $s=b^2+b\in H_0$ ($2$-to-$1$) with
$\delta(b)=\MCM(s)$, so by Proposition~\ref{prop:red2},
\[
C(\rho)=8\,C_H(\rho),\qquad
C_H(\rho):=\bigl|\{(s_1,s_2,s_3)\in H_0^3:
\MCM(s_1)+\rho\,\MCM(s_2)+\sigma\,\MCM(s_3)=0\}\bigr|.
\]
Using $\mathbf{1}_{H_0}(s)=\tfrac12(1+\ch(s))$ and substituting
$t_i=\MCM(s_i)$ (a bijection of $\Fn$, $k$ odd),
\[
8\,C_H(\rho)=\sum_{\varepsilon\in\{0,1\}^3}A_\varepsilon(\rho),
\qquad
A_\varepsilon(\rho)=\!\!\sum_{\substack{t_1+\rho t_2+\sigma t_3=0}}\!\!
\ch\bigl(\varepsilon_1\psi(t_1)+\varepsilon_2\psi(t_2)+\varepsilon_3\psi(t_3)\bigr).
\]
We claim $A_\varepsilon=0$ for all $\varepsilon\notin\{(0,0,0),(1,1,1)\}$.
Parametrize $P_\rho$ by free $(t_2,t_3)$, $t_1=\rho t_2+\sigma t_3$, and use
that $\sum_{u\in\Fn}\ch(\psi(u))=\sum_{s\in\Fn}\ch(s)=0$ ($\psi$ is a
permutation):
for $\varepsilon=(1,0,0)$, for each fixed $t_3$ the map $t_2\mapsto\rho
t_2+\sigma t_3$ is a bijection ($\rho\neq0$), so the sum is
$2^n\sum_u\ch(\psi(u))=0$; the cases $(0,1,0),(0,0,1)$ are immediate;
for $\varepsilon=(1,1,0)$, fix $t_2$ and sum over $t_3$ (bijection since
$\sigma\ne0$), factoring as
$\bigl(\sum_{t_2}\ch(\psi(t_2))\cdot 0\bigr)$-type products; similarly for
$(1,0,1)$; and $(0,1,1)$ factors as
$\bigl(\sum\ch\psi\bigr)^2=0$. Hence
$8C_H=2^{2n}+A_{(1,1,1)}=2^{2n}+A(\rho)$, i.e.\
$C(\rho)=2^{2n}+A(\rho)$, and \eqref{eq:red2} gives $8Z=2^nA$.
The Fourier form follows by detecting the plane with characters:
$\mathbf 1_{P_\rho}=2^{-n}\sum_\mu\ch(\mu(t_1+\rho t_2+\sigma t_3))$ and
$\sum_{x}\ch(\psi(x)+\mu x)=\sum_{y}\ch(y+\mu\,\MCM(y))=W_{\MCM}(\mu,1)=2S(\mu)$,
the last equality by splitting $\Fn=H_0\sqcup H_1$ as in
Section~\ref{sec:closedform} (or directly from
$2S(\mu)=\sum_b\ch(\mu\delta(b))$ and \eqref{eq:id3}).
\end{proof}

\begin{remark}
Equation \eqref{eq:A111zero} is the sharpest ``clean'' form of the conjecture we
know: a single exponential-sum identity for the fixed permutation
$\psi=\MCM^{-1}$, uniform over the $2^n-2$ planes through the diagonal.
Note the planes $P_\rho$ are exactly the $2$-dimensional subspaces of $\Fn^3$
that contain the diagonal line $\{(t,t,t)\}$ and are defined by a linear form
with all three coefficients nonzero.
\end{remark}

%======================================================================
\section{The case $k\equiv\pm1\pmod n$: full proof}\label{sec:k1}

\begin{theorem}\label{thm:k1}
Let $k\equiv\pm1\pmod n$ (and $\gcd(k,n)=1$, e.g.\ $k\in\{1,n-1\}$). Then
$\Delta=H_0$ and Conjecture~\ref{conj:main} holds, for every $n\ge2$ (both
parities).
\end{theorem}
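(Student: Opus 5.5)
By Lemma~\ref{lem:frobenius} it suffices to treat $k=1$. For $k=n-1$ the set $\Delta_{n-1}$ is a Frobenius image of $\Delta_1$, and $H_0$ is Frobenius-stable, so $\Delta_{n-1}=H_0$ once we know $\Delta_1=H_0$. For $k=1$ we have $q=2$, $T_1(x)=x$, and $\MCM(x)=x^{3}/x^{2}=x$. Hence Lemma~\ref{lem:identity3} gives $\delta(b)=b^2+b$ directly, and $\Delta=\{b^2+b\}=H_0$ by Lemma~\ref{lem:2to1}; alternatively one checks $F(b)+F(b+1)+1=b^3+(b+1)^3+1=b^2+b$ by hand. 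The case $n=2$, $k=1$ is covered by the same formula. I would not need the APN property here, since the $2$-to-$1$ structure of $b\mapsto b^2+b$ is elementary.

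Next I compute $N$ directly for $\Delta=H_0$, a linear hyperplane. The condition $v_1x+v_2y+v_3z=0$ with $x,y,z\in H_0$ counts solutions of a linear system. Parametrize by $(y,z)\in H_0^2$ and set $x=\rho y+\sigma z$ (taking $v_1=1$ by Proposition~\ref{prop:red1}). The count is then the number of $(y,z)\in H_0^2$ with $\Tr(\rho y+\sigma z)=0$, that is, $2^{2n-2}$ times the probability that this linear functional vanishes on $H_0\times H_0$.

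In character form, $S(a)=\sum_{t\in H_0}\ch(at)$ equals $2^{n-1}$ if $a\in\{0,1\}$ and $0$ otherwise. So $Z(\rho)=\sum_{\lambda\ne0}S(\lambda)S(\lambda\rho)S(\lambda\sigma)$ can only receive a contribution from $\lambda=1$, and this requires $\rho=1$ and $\sigma=1$, which is impossible since $\rho\ne 0,1$. Thus $Z(\rho)=0$ and $N=2^{2n-3}$, and the conjecture follows from \eqref{eq:Zzero}. This is exactly the support-disjointness mechanism: $\supp S\setminus\{0\}=\{1\}$ contains no three distinct elements summing to zero.

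There is no real obstacle. The only care points are verifying $\MCM=\mathrm{id}$ for $k=1$, since the division by $x^q$ is a polynomial identity and also holds at $x=0$, and checking that the Frobenius transfer preserves $H_0$ so that the statement $\Delta=H_0$ transfers to $k=n-1$. Everything else is a direct evaluation of $S$ on a hyperplane.
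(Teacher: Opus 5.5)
Your proof is correct and follows the paper's argument essentially verbatim: $\delta(b)=b^2+b$ for $k=1$ gives $\Delta=H_0$, the Frobenius transfer handles $k=n-1$, and $\supp S=\{0,1\}$ kills every term of $Z(\rho)$. The linear-algebra paragraph in the middle is left unfinished but is not needed. It would close once you note that $(y,z)\mapsto\Tr(\rho y+\sigma z)$ is a nonzero functional on $H_0^2$ because $\rho\notin\{0,1\}$, which is essentially the alternative proof in the paper's subsequent remark.
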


\begin{proof}
For $k=1$, $d=3$ and
$\delta(b)=b^3+(b+1)^3+1=b^2+b$, so $\Delta=\{b^2+b\}=H_0$
(Lemma~\ref{lem:2to1}). For $k\equiv-1$, i.e.\ $k=n-1$,
Lemma~\ref{lem:frobenius} gives $\Delta_{n-1}=\Delta_1^{2^{2(n-1)}}=H_0$
since $H_0$ is invariant under field automorphisms.

Now compute $S$ for $\Delta=H_0$: for $a\in\{0,1\}$, $\Tr(at)=0$ on $H_0$, so
$S(a)=2^{n-1}$; for $a\notin\{0,1\}$ the linear form $t\mapsto\Tr(at)$ is
nonzero on the hyperplane $H_0$ (its annihilator is $\F_2=\{0,1\}$), hence
balanced, and $S(a)=0$. So $\supp S=\{0,1\}$.
In $Z(\rho)=\sum_{\lambda\neq0}S(\lambda)S(\lambda\rho)S(\lambda\sigma)$
a nonzero term requires $\lambda=\lambda\rho=1$, forcing $\rho=1$, excluded.
Hence $Z\equiv0$ and Proposition~\ref{prop:red1} concludes.
\end{proof}

\begin{remark}
Two other proofs are instructive.
(i) Linear algebra: for $\Delta=H_0$ the solution set of
$v_1x+v_2y+v_3z=0$ in $H_0^3$ is the kernel of an $\F_2$-linear map
$H_0^3\to\Fn$ of full rank $n$ (surjectivity is where $v_1\ne v_2$, $v_i\ne0$
enter), so the count is $2^{3(n-1)-n}$.
(ii) Via Theorem~\ref{thm:red3} and the Monomial Lemma below: for $k=1$,
$\MCM(x)=x$, $\psi=x^{e}$ with $e=1$, and
$P_\rho(\tau)=\tau+(\tau+1)+(\tau+\rho)=\tau+1+\rho$ has exactly one root.
\end{remark}

%======================================================================
\section{The refuted mechanism: termwise vanishing}\label{sec:termwise}

The proof above works because $\supp S\setminus\{0\}$ contains no three
distinct elements summing to zero (indeed it is a single point). This suggests:

\begin{quote}
\emph{Candidate mechanism.} For all distinct nonzero $a_1,a_2$:
$S(a_1)S(a_2)S(a_1+a_2)=0$. (This would kill every term of $Z(\rho)$
separately: the triple $\{\lambda,\lambda\rho,\lambda\sigma\}$ consists of
distinct nonzero elements summing to $0$.)
For instance, $\supp S\setminus\{0\}\subseteq H_1$ would suffice, since three
elements of $H_1$ cannot sum to $0$.
\end{quote}

\textbf{This mechanism is false in general.} Our computations
(Section~\ref{sec:numerics}, Table~\ref{tab:spectra}) show that for every
tested pair $(n,k)$ with $2\le k\le n-2$ the support of $S$ contains many
zero-sum triples of distinct elements --- e.g.\ for $(n,k)=(5,2)$ the support
has $16$ elements and there are already zero-sum triples among them; for
$(n,k)=(13,3)$ there are $4{,}193{,}280$ violating pairs --- and yet
$Z(\rho)=0$ for every $\rho$ in every tested case.
Thus \emph{any} proof of the conjecture must exploit genuine cancellation
between nonzero terms of $Z(\rho)$, not disjointness of supports.
This is the single most important structural lesson from our experiments.

For $k\equiv\pm1\pmod n$ the mechanism does hold (trivially), which is why
that case is easy.

%======================================================================
\section{The monomial mechanism: exact root counts}\label{sec:monomial}

To understand what makes \eqref{eq:A111zero} true, we isolate a family of maps
for which the analogous statement can be proved outright. For an integer $e$
with $\gcd(e,2^n-1)=1$ let $x^{e}$ denote the associated power permutation of
$\Fn$ ($0\mapsto0$; the exponent is read modulo $2^n-1$ on $\Fn^\ast$, so
$e=-1$ is the inversion map).

\begin{lemma}[Monomial Lemma]\label{lem:monomial}
Let $\psi(x)=x^{e}$ with $\gcd(e,2^n-1)=1$, and $\rho\notin\{0,1\}$. Then, with
$A(\rho)$ as in \eqref{eq:A111} (for this $\psi$),
\[
A(\rho)\;=\;2^{n}\,\bigl(r_e(\rho)-1\bigr),
\qquad
r_e(\rho):=\bigl|\{\tau\in\Fn:\ \tau^{e}+(\tau+1)^{e}+(\tau+\rho)^{e}=0\}\bigr|.
\]
The same holds with $\ch$ replaced by $\ch(w\,\cdot)$ for any $w\ne0$ (the
statement is independent of the output mask). Consequently
$A\equiv0$ on $\Fn\setminus\{0,1\}$ if and only if the trinomial-type equation
has \emph{exactly one} root for every $\rho\notin\{0,1\}$.
\end{lemma}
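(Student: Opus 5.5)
We work with the second expression in \eqref{eq:A111}, $A(\rho)=\sum_{t,a}\ch\bigl(w(\psi(t+\rho a)+\psi(t+a)+\psi(t))\bigr)$ with $\psi(x)=x^e$ and an arbitrary output mask $w\neq0$ ($w=1$ being the case at hand). Define $g_a(t)=(t+\rho a)^e+(t+a)^e+t^e$. The plan has three steps: separate the terms with $a=0$, rescale the terms with $a\neq0$ so that $a$ factors out, and then use that $x\mapsto x^e$ is a permutation to evaluate the resulting averages by orthogonality.

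\textbf{Step 1 (the term $a=0$).} Here $g_0(t)=3t^e=t^e$, which is a permutation of $\Fn$. Hence this slice contributes $\sum_t\ch(wt^e)=\sum_s\ch(ws)=0$.

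\textbf{Step 2 (rescaling).} For $a\neq0$, substitute $t=a\tau$. Homogeneity gives
\[
g_a(a\tau)=a^e P(\tau),\qquad P(\tau):=\tau^e+(\tau+1)^e+(\tau+\rho)^e,
\]
so
\[
A(\rho)=\sum_{\tau\in\Fn}\ \sum_{a\in\Fn^\ast}\ch\bigl(w\,a^eP(\tau)\bigr).
\]

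\textbf{Step 3 (summing over $a$).} Since $\gcd(e,2^n-1)=1$, the map $a\mapsto a^e$ permutes $\Fn^\ast$. So for fixed $\tau$ the inner sum is $\sum_{u\ne0}\ch(wuP(\tau))$, which equals $2^n-1$ if $P(\tau)=0$ and $-1$ otherwise. Summing over $\tau$ gives
\[
A(\rho)=(2^n-1)\,r_e(\rho)-\bigl(2^n-r_e(\rho)\bigr)=2^n\bigl(r_e(\rho)-1\bigr).
\]
The mask $w$ enters only through the condition $wP(\tau)=0\iff P(\tau)=0$, which is why the result is independent of the output mask. The final equivalence is immediate from this formula: $A(\rho)=0$ exactly when $r_e(\rho)=1$.

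\textbf{Bookkeeping point.} The only step needing care is Step 2 near $0$, since we use the convention $0^e=0$, which matters when $e$ is negative (as for the inverse map). The identity $(a\tau+c a)^e=a^e(\tau+c)^e$ for $a\ne0$ holds in all cases: both sides vanish when $\tau+c=0$, and otherwise it is multiplicativity on $\Fn^\ast$. So $P$ is well defined pointwise, and no genuine obstacle arises.
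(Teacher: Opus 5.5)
Your proposal is correct and follows the paper's proof essentially verbatim. As in the paper, the $a=0$ slice vanishes because $t\mapsto t^e$ is a permutation, the substitution $t=a\tau$ factors out $a^e$, and orthogonality over $u=a^e\in\Fn^\ast$ yields $2^n\bigl(r_e(\rho)-1\bigr)$; your handling of the mask $w$ and of the convention $0^e=0$ matches the paper's remarks on mask-independence and full multiplicativity ``including at $0$''.
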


\begin{proof}
In $A(\rho)=\sum_{t,a}\ch(\psi(t+\rho a)+\psi(t+a)+\psi(t))$ the terms with
$a=0$ contribute $\sum_t\ch(3\psi(t))=\sum_t\ch(\psi(t))=0$.
For $a\ne0$ substitute $t=a\tau$ and use full multiplicativity of the power
map (including at $0$):
\[
\psi(a(\tau+\rho))+\psi(a(\tau+1))+\psi(a\tau)=a^{e}\,P_\rho(\tau),
\qquad P_\rho(\tau):=\tau^{e}+(\tau+1)^{e}+(\tau+\rho)^{e}.
\]
Since $a\mapsto a^{e}$ permutes $\Fn^\ast$,
\[
A(\rho)=\sum_{\tau}\sum_{u\neq0}\ch\bigl(u\,P_\rho(\tau)\bigr)
=\sum_{\tau}\Bigl(2^n\bigl[P_\rho(\tau)=0\bigr]-1\Bigr)
=2^n r_e(\rho)-2^n. \qedhere
\]
\end{proof}

\begin{theorem}[Gold permutations satisfy the identity]\label{thm:gold}
Let $n$ be odd and $e=2^j+1$ for any $j\ge1$ (then $\gcd(e,2^n-1)=1$, so
$x^{e}$ is a permutation). Then $r_e(\rho)=1$ for every $\rho\notin\{0,1\}$;
consequently $A\equiv0$ for $\psi=x^{2^j+1}$.
\end{theorem}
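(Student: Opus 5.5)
The plan is to compute the root count $r_e(\rho)$ of the Monomial Lemma (Lemma~\ref{lem:monomial}) directly. For a Gold exponent, $P_\rho$ collapses, after a translation of the variable, to a pure power equation $x^{e}=c$, and such an equation has exactly one root because $x^e$ is a permutation.

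\textbf{Step 1: $x^e$ is a permutation.} We have $\gcd(2^j+1,2^n-1)\mid\gcd(2^{2j}-1,2^n-1)=2^{\gcd(2j,n)}-1$. Since $n$ is odd, $\gcd(2j,n)=\gcd(j,n)$, so this common divisor also divides $2^{j}-1$. Because $\gcd(2^j+1,2^j-1)=1$, it follows that $\gcd(e,2^n-1)=1$. Hence $x^{e}$ permutes $\Fn$, and the hypotheses of Lemma~\ref{lem:monomial} are met.

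\textbf{Step 2: expand $P_\rho$.} Write $Q=2^j$. The map $u\mapsto u^Q$ is additive, so $(\tau+c)^{e}=(\tau^Q+c^Q)(\tau+c)=\tau^{e}+c\,\tau^{Q}+c^{Q}\tau+c^{e}$. Applying this with $c=1$ and $c=\rho$ and adding the three terms (characteristic $2$, so $3\tau^e=\tau^e$) gives
\[
P_\rho(\tau)=\tau^{e}+\sigma\,\tau^{Q}+\sigma^{Q}\,\tau+(1+\rho^{e}),\qquad \sigma=1+\rho .
\]

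\textbf{Step 3: translate to remove the middle terms.} Substitute $\tau=x+\sigma$ and use the same expansion. The $x^{Q}$ coefficient is $\sigma+\sigma=0$, and the $x$ coefficient is $\sigma^{Q}+\sigma^{Q}=0$. The constant term is $\sigma^{e}+\sigma^{e}+\sigma^{e}+1+\rho^{e}$, which reduces to $\sigma^e+1+\rho^e$. Therefore
\[
P_\rho(x+\sigma)=x^{e}+\bigl(\sigma^{e}+1+\rho^{e}\bigr).
\]
By Step~1, the equation $x^{e}=\sigma^{e}+1+\rho^{e}$ has exactly one solution $x\in\Fn$. Hence $r_e(\rho)=1$ for every $\rho\notin\{0,1\}$.

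\textbf{Step 4: conclude.} Lemma~\ref{lem:monomial} then gives $A(\rho)=2^n(r_e(\rho)-1)=0$, so $A\equiv0$ for $\psi=x^{2^j+1}$.

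There is no serious obstacle in this argument. The only step needing care is the bookkeeping of the cross terms in Step~2: one must check that the translation by $\sigma$ cancels both the $x^{Q}$ and the $x$ coefficients simultaneously, which works because the coefficients $\sigma$ and $\sigma^Q$ in $P_\rho$ are Frobenius-compatible. The hypothesis that $n$ is odd enters only through Step~1.
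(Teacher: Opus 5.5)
Your proof is correct and follows the paper's argument essentially step for step. You use the same gcd argument for bijectivity, expand $P_\rho$, translate by $\sigma=1+\rho$ to eliminate the $\tau^{Q}$ and $\tau$ terms, and apply the Monomial Lemma. The only cosmetic difference is that the paper further simplifies the constant $\sigma^{Q+1}+1+\rho^{Q+1}$ to $\rho+\rho^{Q}$, which, as you note, is unnecessary for the root count.
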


\begin{proof}
Write $Q=2^j$. Expanding,
\[
P_\rho(\tau)=\tau^{Q+1}+(\tau+1)^{Q+1}+(\tau+\rho)^{Q+1}
=\tau^{Q+1}+c\,\tau^{Q}+c^{Q}\tau+\bigl(1+\rho^{Q+1}\bigr),
\qquad c:=1+\rho .
\]
Substituting $\tau=z+c$ and using
$(z+c)^{Q+1}=z^{Q+1}+cz^{Q}+c^{Q}z+c^{Q+1}$:
\[
P_\rho(z+c)=z^{Q+1}+c^{Q+1}+1+\rho^{Q+1}
=z^{Q+1}+\rho+\rho^{Q},
\]
since $c^{Q+1}=(1+\rho)(1+\rho^{Q})=1+\rho+\rho^{Q}+\rho^{Q+1}$.
For $n$ odd, $\gcd(2^j+1,2^n-1)=1$ (any common divisor divides both
$2^{2j}-1$ and $2^n-1$, hence divides
$2^{\gcd(2j,n)}-1=2^{\gcd(j,n)}-1$, which divides $2^{j}-1$, coprime to $2^j+1$),
so $z\mapsto z^{Q+1}$ is a bijection and $z^{Q+1}=\rho+\rho^{Q}$ has exactly
one solution.
\end{proof}

\begin{theorem}[the inversion map satisfies the identity]\label{thm:inverse}
Let $e=-1$ (i.e.\ $\psi(x)=x^{2^n-2}$), any $n\ge2$. Then $r_e(\rho)=1$ for all
$\rho\notin\{0,1\}$; consequently $A\equiv0$ for $\psi=x^{-1}$.
\end{theorem}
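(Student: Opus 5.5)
The plan is to apply the Monomial Lemma (Lemma~\ref{lem:monomial}) with $e=-1$. This is legitimate: $\gcd(2^n-2,2^n-1)=1$, so inversion (with $0\mapsto0$) is a power permutation. The Lemma reduces the claim to showing that
\[
P_\rho(\tau)=\tau^{-1}+(\tau+1)^{-1}+(\tau+\rho)^{-1}
\]
has exactly one zero $\tau\in\Fn$ for each $\rho\notin\{0,1\}$. Once $r_{-1}(\rho)=1$ is established, the Lemma gives $A(\rho)=2^n(r_{-1}(\rho)-1)=0$ directly.

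The only subtlety is the convention $0^{-1}=0$, so I will first treat the three exceptional points $\tau\in\{0,1,\rho\}$ separately. For each one:
\begin{itemize}
\item At $\tau=0$ we get $P_\rho(0)=0+1+\rho^{-1}$, which is nonzero because $\rho\neq1$.
\item At $\tau=1$ we get $P_\rho(1)=1+0+(1+\rho)^{-1}$, which is nonzero because $\rho\ne0$.
\item At $\tau=\rho$ we get $\rho^{-1}+(\rho+1)^{-1}=\bigl(\rho(\rho+1)\bigr)^{-1}\neq0$.
\end{itemize}
So none of these three points is a root.

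For $\tau\notin\{0,1,\rho\}$ all three inverses are genuine field inverses. I will clear denominators by multiplying by $\tau(\tau+1)(\tau+\rho)\neq0$. In characteristic $2$ the numerator is
\[
(\tau+1)(\tau+\rho)+\tau(\tau+\rho)+\tau(\tau+1)=3\tau^2+2(1+\rho)\tau+\rho=\tau^2+\rho .
\]
Hence the roots are exactly the solutions of $\tau^2=\rho$ that lie outside $\{0,1,\rho\}$. Squaring is a bijection of $\Fn$, so there is a unique candidate $\tau_0=\rho^{2^{n-1}}$. It remains to check that $\tau_0$ is not excluded:
\begin{itemize}
\item $\tau_0=0$ or $\tau_0=1$ would force $\rho\in\{0,1\}$.
\item $\tau_0=\rho$ would give $\rho^2=\rho$, again forcing $\rho\in\{0,1\}$.
\end{itemize}
Therefore $r_{-1}(\rho)=1$ exactly.

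There is no real obstacle in this argument. The only step that needs care is the bookkeeping at the three points where the convention $0^{-1}=0$ breaks the rational-function identity, which is why those points are handled first. No parity hypothesis on $n$ enters, so the result holds for every $n\ge2$, as stated.
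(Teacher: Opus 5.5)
Your proposal is correct and matches the paper's proof essentially step for step. Both arguments apply the Monomial Lemma, rule out the three points $\tau\in\{0,1,\rho\}$ where the convention $0^{-1}=0$ matters, and clear denominators to reach $\tau^2+\rho=0$, whose unique root $\rho^{1/2}$ lies outside $\{0,1,\rho\}$.
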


\begin{proof}
First check $\tau\in\{0,1,\rho\}$ (where the convention $0^{-1}=0$ matters):
$P_\rho(0)=1+\rho^{-1}\ne0$ ($\rho\ne1$);
$P_\rho(1)=1+(1+\rho)^{-1}\ne0$ ($\rho\ne0$);
$P_\rho(\rho)=\rho^{-1}+(1+\rho)^{-1}\ne0$ ($\rho\neq1+\rho$ always).
For $\tau\notin\{0,1,\rho\}$, multiply
$\tau^{-1}+(\tau+1)^{-1}+(\tau+\rho)^{-1}=0$ by $\tau(\tau+1)(\tau+\rho)$:
\[
(\tau+1)(\tau+\rho)+\tau(\tau+\rho)+\tau(\tau+1)
=\tau^{2}+\rho=0,
\]
which has the unique solution $\tau=\rho^{1/2}$ (Frobenius is bijective);
and $\rho^{1/2}\in\{0,1,\rho\}$ would force $\rho\in\{0,1\}$.
\end{proof}

\begin{remark}[what these proofs teach us]\label{rem:mechanism}
(i) The proofs rest on an \emph{exact fibration}: the scaling action
$t=a\tau$ turns the two-dimensional sum into a one-parameter family of
complete character sums, leaving a root-counting problem, which
miraculously has the constant answer $1$ ($=$ the average).
(ii) The property is quite rigid: by Lemma~\ref{lem:monomial} it is
insensitive to the output mask for monomials, but for non-monomial $\psi$ it
is mask-sensitive: numerically, $\lambda\cdot\MCM^{-1}$ fails
\eqref{eq:A111zero} for generic $\lambda\neq1$, while $\MCM^{-1}(ax)+c$
satisfies it (Section~\ref{sec:numerics}). This matches the general fact that
inner scalings commute with the $\rho$-dilation structure while outer scalings
change the character.
(iii) Numerically, \emph{random} permutations fail \eqref{eq:A111zero}
decisively; the identity is a genuine structural property.
(iv) $\MCM$ itself (as opposed to $\MCM^{-1}$) \emph{fails}
\eqref{eq:A111zero} for $2\le k\le n-2$: the property is really about the
inverse map. Since $\MCM^{-1}$ is not a monomial for $2\le k\le n-2$, the
scaling fibration is not available; this was the principal obstacle to a
proof of the general case, circumvented for $k=2$ in Section~\ref{sec:k2}
by passing to a quadratic model and a different monomial substitution.
(v) For the Kasami exponent itself, $r_{d(k)}(\rho)$ is \emph{not} identically
$1$ (e.g.\ $(n,k)=(7,2)$ gives root counts $\{0,1,2\}$), so the naive
transplantation of the monomial mechanism to $F$ fails as well
(consistent with (iv)).
\end{remark}

%======================================================================
\section{A closed form for $S$, and a second parametrization of $\Delta$}
\label{sec:closedform}

Throughout this section $n$ is odd (no assumption on the parity of $k$),
$q=2^k$, $\gcd(k,n)=1$. Then $\gcd(q+1,2^n-1)=1$ (as in the proof of
Theorem~\ref{thm:gold}), so $x\mapsto x^{q+1}$ is a permutation of $\Fn$, and
exponents $\tfrac1{q+1}$, $q^2-q+1=\tfrac{q^3+1}{q+1}$ act on $\Fn^\ast$ in the
usual way. Recall $(q+1)\,d=q^3+1$, so that
\begin{equation}\label{eq:goldsub}
F\bigl(x^{q+1}\bigr)=x^{q^{3}+1}\qquad\text{(the Gold substitution).}
\end{equation}

\begin{theorem}[closed form]\label{thm:closedform}
Let $n$ be odd, $\gcd(k,n)=1$, and define the rational map
\[
\Phi(m)\;=\;\frac{m+m^{q}+m^{q^{2}}}{m^{\,q^{2}-q+1}}
\qquad(m\in\Fn^{\ast}).
\]
Then:
\begin{enumerate}
\item[(i)] The map $b\mapsto m(b):=\bigl(x+y\bigr)^{-(q+1)}$, where
$x=b^{1/(q+1)}$, $y=(b+1)^{1/(q+1)}$, is exactly $2$-to-$1$ from $\Fn$ onto
$H_1$, with fibers $\{b,b+1\}$, and
\[
D(b)\;=\;F(b)+F(b+1)\;=\;\Phi\bigl(m(b)\bigr).
\]
\item[(ii)] $\Phi$ restricted to $H_1$ is a bijection onto $\Delta+1=\{t+1:
t\in\Delta\}$.
\item[(iii)] For every $a\in\Fn$,
\begin{equation}\label{eq:closedform}
S(a)\;=\;\ch(a)\sum_{m\in H_1}\ch\bigl(a\,\Phi(m)\bigr).
\end{equation}
\item[(iv)] Conjecture~\ref{conj:main} is equivalent to: for all
$\rho\notin\{0,1\}$, $\sigma=1+\rho$,
\[
\bigl|\{(m_1,m_2,m_3)\in H_1^3:\ \Phi(m_1)+\rho\,\Phi(m_2)+\sigma\,\Phi(m_3)=0\}\bigr|
\;=\;2^{2n-3}.
\]
\end{enumerate}
\end{theorem}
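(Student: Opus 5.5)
The plan is to make the Gold substitution \eqref{eq:goldsub} do all the work: it turns $F(b)$ and $F(b+1)$ into Gold-type powers, after which the claim becomes a telescoping computation in a normalized variable. Since $n$ is odd, $x\mapsto x^{q+1}$ is a permutation, so $x=b^{1/(q+1)}$ and $y=(b+1)^{1/(q+1)}$ are well defined. They satisfy
\[
x^{q+1}+y^{q+1}=1,\qquad F(b)=x^{q^3+1},\qquad F(b+1)=y^{q^3+1}.
\]
Because $b\neq b+1$, we have $x\neq y$, so $s:=x+y\neq0$ and $m=s^{-(q+1)}\in\Fn^\ast$.

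\textbf{Key step: normalization.} Put $X=x/s$ and $Y=y/s=X+1$. Dividing the relation $x^{q+1}+y^{q+1}=1$ by $s^{q+1}$ gives
\[
m=X^{q+1}+(X+1)^{q+1}=X^q+X+1,\qquad\text{i.e. } X^q+X=m+1.
\]
Similarly,
\[
D(b)=s^{q^3+1}\bigl(X^{q^3+1}+(X+1)^{q^3+1}\bigr)=s^{q^3+1}\bigl(X^{q^3}+X+1\bigr).
\]
Telescoping with $c=m+1$ gives
\[
X^{q^3}+X=(X^q+X)+(X^q+X)^q+(X^q+X)^{q^2}=c+c^q+c^{q^2}=m+m^q+m^{q^2}+1.
\]
Hence $X^{q^3}+X+1=m+m^q+m^{q^2}$. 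Since $s^{q^3+1}=s^{(q+1)d}=m^{-d}$, this yields $D(b)=\Phi(m(b))$. I expect this identity to be the only real computation, and the normalization $X=x/s$ is the trick that makes it collapse.

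\textbf{Fibre structure.} Taking traces in $X^q+X=m+1$ gives $\Tr(m)=\Tr(1)=1$, since $n$ is odd; so $m(b)\in H_1$. Replacing $b$ by $b+1$ swaps $x$ and $y$ and leaves $s$ unchanged, so $m(b)=m(b+1)$. For exact $2$-to-$1$ I would build the inverse. Given $m\in H_1$, the element $s=m^{-1/(q+1)}$ is unique. The map $X\mapsto X^q+X$ is $\F_2$-linear with kernel $\F_{2^{\gcd(k,n)}}=\F_2$, so its image is the hyperplane $H_0$. As $\Tr(m+1)=0$, the equation $X^q+X=m+1$ has exactly the two solutions $X$ and $X+1$. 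These give $x=sX$ or $x=s(X+1)$, and from $x^{q+1}+y^{q+1}=1$ one gets $b=x^{q+1}$ or $b+1$. So each $m\in H_1$ has exactly one fibre $\{b,b+1\}$, which proves (i).

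\textbf{Remaining parts.} For (ii), part (i) gives $\Phi(H_1)=D(\Fn)=\Delta+1$. Both sets have $2^{n-1}$ elements (Lemma~\ref{lem:2to1}), so $\Phi|_{H_1}$ is a bijection. For (iii), reindex $S(a)=\sum_{t\in\Delta}\ch(at)$ by $t=\Phi(m)+1$ with $m\in H_1$, and pull out the factor $\ch(a)$. For (iv), write $N(1,\rho)$ (Proposition~\ref{prop:red1}) over triples $t_i=\Phi(m_i)+1$ via the bijection from (ii). The constant terms cancel because $1+\rho+\sigma=0$, which gives exactly the stated count over $H_1^3$.
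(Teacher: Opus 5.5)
Your proof is correct and is essentially the paper's argument: your normalization $X=x/s$ is the paper's $z=x/e$, and your telescoping $X^{q^3}+X=c+c^q+c^{q^2}$ is the paper's identity $z^{q^3}+z=R(z^q+z)$ with $R(w)=w+w^q+w^{q^2}$. The trace/kernel argument for the fibres is also the same; you build the inverse explicitly where the paper counts pairs. The one real difference is in (iii)--(iv), where you reindex directly through the bijection $t=\Phi(m)+1$ from (ii) instead of inserting the closed form into $Z(\rho)$. For (iv) this, via $1+\rho+\sigma=0$, identifies the $H_1^3$-count with $N(1,\rho)$ outright, a slightly more direct, character-free route to the same equivalence.
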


\begin{proof}
\emph{(i)}
Fix $b$ and let $x=b^{1/(q+1)}$, $y=(b+1)^{1/(q+1)}$, so that by
\eqref{eq:goldsub}, $F(b)=x^{q^3+1}$, $F(b+1)=y^{q^3+1}$, and
$x^{q+1}+y^{q+1}=1$; moreover $b\leftrightarrow(x,y)$ is a bijection onto
$\{(x,y):x^{q+1}+y^{q+1}=1\}$, and swapping $b\leftrightarrow b+1$ swaps
$x\leftrightarrow y$. Put $e=x+y$; then $e\neq0$ (else $0=1$).
Writing $y=x+e$ and expanding,
\begin{equation}\label{eq:xeq}
x^{q+1}+(x+e)^{q+1}=e\,x^{q}+e^{q}x+e^{q+1}=1
\quad\Longleftrightarrow\quad
z^{q}+z=1+e^{-(q+1)}=:c_e,\qquad z:=x/e .
\end{equation}
The additive map $\lambda(z)=z^{q}+z$ has kernel
$\F_{2^k}\cap\Fn=\F_2$ (as $\gcd(k,n)=1$) and image $H_0$
(the image lies in $H_0$ since $\Tr(z^q)=\Tr(z)$, and has size $2^{n-1}$).
So for a given $e\ne0$, \eqref{eq:xeq} has solutions ($2$ of them, $\{x,x+e\}$)
iff $\Tr(c_e)=0$, i.e.\ iff $\Tr\bigl(e^{-(q+1)}\bigr)=\Tr(1)=1$ ($n$ odd).
Since each of the $2^n$ elements $b$ produces exactly one pair $(x,x+e)$ and
each admissible $e$ carries exactly two such pairs (namely $(x,x{+}e)$ and
$(x{+}e,x)$, corresponding to $b$ and $b{+}1$), the map $b\mapsto e(b)$ is
exactly $2$-to-$1$ onto $\{e:\Tr(e^{-(q+1)})=1\}$, with fibers $\{b,b+1\}$.
As $n$ is odd, $m:=e^{-(q+1)}$ is a bijective change of variable
$\{e\}\to\Fn^\ast$, and the admissibility condition becomes $\Tr(m)=1$, i.e.\
$m\in H_1$ (note $0\notin H_1$). Thus $b\mapsto m(b)$ is exactly $2$-to-$1$
onto $H_1$ with fibers $\{b,b+1\}$.

Next we compute $D(b)$ in terms of $m$. Using \eqref{eq:goldsub},
\[
D(b)=x^{q^{3}+1}+(x+e)^{q^{3}+1}
=e\,x^{q^{3}}+e^{q^{3}}x+e^{q^{3}+1}
=e^{q^{3}+1}\bigl(z^{q^{3}}+z+1\bigr).
\]
Let $R(w):=w+w^{q}+w^{q^{2}}$. In the commutative ring
$\F_2[\sigma_k]$ of $\F_2$-polynomials in the Frobenius $\sigma_k(w)=w^q$,
we have $z^{q^3}+z=\lambda(R(z))=R(\lambda(z))$, since
$(T+1)(T^2+T+1)=T^3+1$. Hence, by \eqref{eq:xeq},
\[
z^{q^{3}}+z=R(c_e)=R(1)+R\bigl(e^{-(q+1)}\bigr)=1+R(m),
\]
because $R(1)=1+1+1=1$. Therefore
\[
D(b)=e^{q^{3}+1}\,R(m)
= m^{-\frac{q^{3}+1}{q+1}}R(m)
= \frac{R(m)}{m^{\,q^{2}-q+1}}
= \Phi(m).
\]

\emph{(ii)} By (i), the image of $D$ is $\Phi(H_1)$; but the image of
$D=\delta+1$ is $\Delta+1$, of size $2^{n-1}=|H_1|$; hence $\Phi|_{H_1}$ is a
bijection onto $\Delta+1$.

\emph{(iii)} Using \eqref{eq:2to1} and (i),
\[
2S(a)=\sum_b\ch\bigl(a\delta(b)\bigr)=\ch(a)\sum_b\ch\bigl(a D(b)\bigr)
=\ch(a)\cdot2\sum_{m\in H_1}\ch\bigl(a\Phi(m)\bigr).
\]

\emph{(iv)} Insert \eqref{eq:closedform} into
$Z(\rho)=\sum_{\lambda\ne0}S(\lambda)S(\lambda\rho)S(\lambda\sigma)$ and note
$\ch(\lambda)\ch(\lambda\rho)\ch(\lambda\sigma)=\ch(\lambda(1+\rho+\sigma))=1$:
\[
Z(\rho)=\sum_{m_1,m_2,m_3\in H_1}\ \sum_{\lambda\neq0}
\ch\Bigl(\lambda\bigl[\Phi(m_1)+\rho\Phi(m_2)+\sigma\Phi(m_3)\bigr]\Bigr)
=2^n\,\#\{\cdots\}-2^{3(n-1)},
\]
and $Z\equiv0$ iff the count equals $2^{2n-3}$
(Proposition~\ref{prop:red1}).
\end{proof}

\begin{remark}
(a) For $k=1$, $\Phi(m)=(m+m^2+m^4)/m^3=m+m^{-1}+m^{-2}$ and
$\Tr(\Phi(m))=\Tr(m)$, so $\Phi(H_1)\subseteq H_1=H_0+1$, consistent with
$\Delta=H_0$ (Theorem~\ref{thm:k1}).
(b) Equation \eqref{eq:closedform} was verified numerically for every
admissible odd-$n$ pair with $n\le13$ (all $a$ for $n\le11$; sampled $a$ for
$n=13$); see Section~\ref{sec:numerics}.
(c) The two parametrizations $\Delta=\MCM(H_0)$ and $\Delta+1=\Phi(H_1)$ are
genuinely different rational models of the same set; the second has the
advantage that the trace condition sits on the \emph{variable} side, which is
what makes the manipulation in (iv) character-free.
\end{remark}

%======================================================================
\section{The case $k=2$ ($d=13$): a proof of the conjecture}\label{sec:k2}

In this section we \emph{prove} Conjecture~\ref{conj:main} in the first
genuinely open case, $k=2$, i.e.\ $d=13$, $q=4$; note $\gcd(2,n)=1$ forces
$n$ \emph{odd} throughout. The proof has three stages:
(1) a quadratic model turning all character sums into quadratic Gauss sums,
governed by a \emph{master correspondence} between the radicals of the pencil
and two low-degree algebraic families;
(2) a counting theorem ($G_1\equiv0$, Theorem~\ref{claim:Cprime}) which
determines the spectrum completely and shows that $\MCM=x(x+1)^5$ is exactly
$2$-to-$1$;
(3) a monomial change of variables converting the entire triple correlation
into an exact root count for an explicit polynomial family, which is then
pinned down by an explicit root together with the unconditional average
identity of Remark~\ref{rem:average}.
No Arf-invariant or sign computations are needed anywhere.

\subsection{The quadratic model}

For $k=2$,
\[
\MCM(x)=\frac{T_2(x)^{5}}{x^{4}}=\frac{(x+x^2)^5}{x^4}=x(x+1)^5=:g(x)
=x^{6}+x^{5}+x^{2}+x ,
\]
a sextic polynomial all of whose exponents have binary weight $\le2$.
Consequently, for $u\ne0$, using $\Tr(y^2)=\Tr(y)$,
\begin{equation}\label{eq:quadmodel}
\Tr\bigl(u\,g(s)\bigr)=\Tr\bigl(us^{5}+\sqrt{u}\,s^{3}+(u+\sqrt{u})s\bigr),
\end{equation}
a \emph{quadratic} Boolean form in $s$ (the exponents $5=2^2{+}1$ and
$3=2{+}1$ are Gold exponents). Define for $\varepsilon\in\{0,1\}$
\[
G_\varepsilon(u):=\sum_{s\in\Fn}\ch\bigl(u\,g(s)+\varepsilon s\bigr),
\qquad\text{so}\qquad
2S(u)=G_0(u)+G_1(u)=\sum_b\ch\bigl(u\,\delta(b)\bigr)=:H(u)
\]
by Lemma~\ref{lem:2to1} and $\Delta=g(H_0)$.

\begin{lemma}[radical of the pencil]\label{lem:radical}
Let $u\neq0$ and $Q_u(s)=\Tr(us^5+\sqrt u\,s^3)$. The radical
$V_u=\{w: Q_u(s+w)+Q_u(s)+Q_u(w)=0\ \forall s\}$ is the kernel of the
linearized polynomial
\[
A_u(s)=u^{4}s^{16}+u^{2}s^{8}+u s^{2}+u s ,
\]
$\dim_{\F_2}V_u=:v_u\in\{1,3\}$, and $G_\varepsilon(u)\in\{0,\pm2^{(n+v_u)/2}\}$
with
\begin{equation}\label{eq:compat}
G_\varepsilon(u)\ne0\iff
\Tr\bigl(u\,g(w)\bigr)+\varepsilon\,\Tr(w)=0\quad\text{for all }w\in V_u .
\end{equation}
\end{lemma}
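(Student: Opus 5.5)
The approach is to compute the polar (symmetric bilinear) form of $Q_u$ and then apply the standard theory of quadratic Boolean forms. First, by \eqref{eq:quadmodel}, $\Tr(u\,g(s)+\varepsilon s)=Q_u(s)+\Tr\bigl((u+\sqrt u+\varepsilon)s\bigr)$, where $Q_u(s)=\Tr(us^5+\sqrt u\,s^3)$ is quadratic and the remaining part is linear.

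\textbf{Polar form.} Expanding, $(s+w)^5+s^5+w^5=s^4w+sw^4$ and $(s+w)^3+s^3+w^3=s^2w+sw^2$. Hence
\[
B(s,w)=Q_u(s+w)+Q_u(s)+Q_u(w)=\Tr\bigl(u s^4w+u sw^4+\sqrt u\,s^2w+\sqrt u\,sw^2\bigr).
\]
We move every term to a linear expression in $s$ using $\Tr(y)=\Tr(y^{2^i})$. The term $\Tr(us^4w)$ becomes $\Tr(u^{1/4}w^{1/4}s)$, and $\Tr(\sqrt u s^2w)$ becomes $\Tr(u^{1/4}w^{1/2}s)$. Therefore $B(s,w)=\Tr(s\,L(w))$ with
\[
L(w)=uw^4+\sqrt u\,w^2+u^{1/4}w^{1/2}+u^{1/4}w^{1/4}.
\]
The radical is $\ker L$. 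Raising $L(w)$ to the fourth power (a bijection) gives $u^4w^{16}+u^2w^8+uw^2+uw=A_u(w)$, so $V_u=\ker A_u$.

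\textbf{Dimension.} $A_u$ is linearized of $2$-degree $4$, so $\dim V_u\le4$. The dimension $\dim V_u$ has the same parity as $n$ because a nondegenerate alternating form on $\Fn/V_u$ forces $n-v_u$ to be even. Since $n$ is odd, $v_u\in\{1,3\}$ once $v_u\ge1$ is shown.

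For $v_u\ge1$ I would exhibit a nonzero kernel element, or argue that an alternating form on an odd-dimensional space is degenerate. The latter is immediate: $B(w,w)=0$ always holds in characteristic $2$ for a polar form, and $B$ is symmetric, so it is alternating. Hence $v_u\ge1$ follows automatically, and $v_u\in\{1,3\}$.

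This parity/size bookkeeping is the only delicate point; I expect no real obstacle beyond checking the Frobenius-twisted collection of terms in $L$.

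\textbf{Gauss sums.} For the standard evaluation, let $f(s)=Q_u(s)+\Tr(\beta s)$ with $\beta=u+\sqrt u+\varepsilon$. Then
\[
\Bigl(\sum_s\ch(f(s))\Bigr)^2=\sum_{s,w}(-1)^{f(s)+f(s+w)}=2^n\sum_{w\in V_u}(-1)^{f(w)}.
\]
Here we use that $f(s)+f(s+w)=B(s,w)+f(w)$, which follows since the linear part cancels and $Q_u(w)$ appears.

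The restriction $f|_{V_u}$ is additive, because $B$ vanishes on $V_u$. So the inner sum equals $|V_u|$ if $f\equiv0$ on $V_u$, and $0$ otherwise. This gives $G_\varepsilon(u)^2\in\{0,2^{n+v_u}\}$, hence $G_\varepsilon(u)\in\{0,\pm2^{(n+v_u)/2}\}$.

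Finally, $f(w)=\Tr(u\,g(w))+\varepsilon\Tr(w)$ by \eqref{eq:quadmodel} applied back. This yields \eqref{eq:compat}.
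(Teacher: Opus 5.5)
Your proposal is correct and follows essentially the same route as the paper: the same polar form $\Tr(s\,L(w))$ with $A_u=L^4$, the degree bound $v_u\le4$ combined with the even-rank parity argument, and the compatibility criterion read off from $f=\Tr(u\,g(\cdot)+\varepsilon\,\cdot)$ restricted to $V_u$. The one difference is that your squaring identity $G_\varepsilon(u)^2=2^n\sum_{w\in V_u}(-1)^{f(w)}$ actually proves the ``standard theory'' that the paper only cites, giving the magnitude and both directions of \eqref{eq:compat} at once; your separate argument for $v_u\ge1$ is redundant, since $v_u$ being odd already forces it.
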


\begin{proof}
Polarizing \eqref{eq:quadmodel}: the associated symplectic form is
$B_u(s,w)=\Tr\bigl(u(sw^4+s^4w)+\sqrt u(s^2w+sw^2)\bigr)
=\Tr\bigl(s\cdot L_u(w)\bigr)$ with
$L_u(w)=uw^4+\sqrt u\,w^2+(uw)^{1/4}+u^{1/4}w^{1/2}$, and $A_u=L_u^{4}$,
so $\ker L_u=\ker A_u$. Since $\deg A_u=2^4$, $v_u\le4$; since the rank
$n-v_u$ of a symplectic form is even and $n$ is odd, $v_u\in\{1,3\}$.
The value statement is the standard theory of quadratic Boolean forms:
the sum vanishes unless the affine part is compatible on the radical, and
then has absolute value $2^{(n+v)/2}$. For the compatibility criterion,
observe that for $w\in V_u$ and all $s$,
$g(s+w)+g(s)+g(w)$ is the field-valued polar form of the quadratic part of
$g$ plus linear terms, and taking $\Tr(u\,\cdot)$ of it gives
$B_u(s,w)=0$; hence
$\ch(u g(s{+}w)+\varepsilon(s{+}w))
=\ch(ug(s)+\varepsilon s)\,\ch(ug(w)+\varepsilon w)$,
so $G_\varepsilon(u)\ne0$ forces $\Tr(ug(w))+\varepsilon\Tr(w)=0$ on $V_u$;
conversely if the affine form vanishes on $V_u$ the standard
rank-$(n-v)$ evaluation applies.
\end{proof}

We write $T(u,w):=\Tr\bigl(w+u\,g(w)\bigr)$ for $w\in V_u$; thus
$G_1(u)\neq0$ iff $T(u,\cdot)\equiv0$ on $V_u$, and
$G_0(u)\ne0$ iff $\Tr(u\,g(\cdot))\equiv0$ on $V_u$.

\subsection{The fibers of $g$}

\begin{lemma}[trace separation]\label{lem:traceodd}
Let $X\neq Y$ with $g(X)=g(Y)$. Then $\Tr(X+Y)=1$. Consequently every fiber
of $g$ has at most two elements, contains at most one element of $H_0$ and at
most one of $H_1$; in particular $g$ is injective on $H_0$ and on $H_1$
separately.
\end{lemma}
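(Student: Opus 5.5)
The plan is to argue directly with the polynomial $g(x)=x^6+x^5+x^2+x$ by taking additive derivatives. Put $w=X+Y\neq0$, so that $Y=X+w$. I will show that solvability of $g(X+w)=g(X)$ in $X$ forces $\Tr(w)=1$. The remaining assertions then follow at once. If a fiber contained three elements, two of them would differ by $w_1$, a second pair by $w_2$, and the third pair by $w_1+w_2$. All three would have trace $1$, but $\Tr(w_1+w_2)=\Tr(w_1)+\Tr(w_2)=0$, a contradiction. Likewise, two distinct elements in the same fiber lying in the same hyperplane $H_\varepsilon$ would give $\Tr(X+Y)=0$. So each fiber has at most two elements, at most one in $H_0$ and at most one in $H_1$.

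\emph{Step 1: the derivative.} Every exponent of $g$ has binary weight at most $2$, so $g(X+w)+g(X)+g(w)$ is $\F_2$-linear in $X$. Expanding $(X+w)^6$ and $(X+w)^5$ gives
\[
g(X+w)+g(X)=(w^2+w)X^4+w^4X^2+w^4X+g(w).
\]
\emph{Case $w=1$.} The equation becomes $X^2+X=g(1)=0$. Here $\Tr(1)=1$ because $n$ is odd, as noted at the start of Section~\ref{sec:k2}, so the conclusion holds.

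\emph{Case $w\notin\{0,1\}$.} Substitute $X=wZ$ and divide by $w^5$. This yields
\[
L(Z):=(w+1)Z^4+wZ^2+Z=\frac{(w+1)^5}{w^4},
\]
using $g(w)=w(w+1)^5$.

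\emph{Step 2: factor the linearized operator.} This is the step I expected to be the crux: finding a trace obstruction for a general linearized trinomial. The point is that $L$ factors through the Artin--Schreier map. With $U:=Z^2+Z$,
\[
L(Z)=(w+1)U^2+U .
\]
Setting $V:=(w+1)U$ and multiplying the equation by $w+1$, we obtain
\[
V^2+V=\frac{(w+1)^6}{w^4}.
\]
A necessary condition for solvability is therefore $\Tr\bigl((w+1)^6/w^4\bigr)=0$. The right-hand side is a square, so this condition is equivalent to $\Tr\bigl((w+1)^3/w^2\bigr)=0$.

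\emph{Step 3: evaluate the trace.} Expanding,
\[
\frac{(w+1)^3}{w^2}=w+1+w^{-1}+w^{-2}.
\]
Since $\Tr(w^{-2})=\Tr(w^{-1})$ and $\Tr(1)=1$ ($n$ odd), the trace equals $\Tr(w)+1$. Solvability thus forces $\Tr(w)=1$, i.e.\ $\Tr(X+Y)=1$. Only the necessity direction is needed, so no further condition on $U$ (such as $\Tr(U)=0$) has to be checked.

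As a cross-check, injectivity on $H_0$ is already known from Lemma~\ref{lem:2to1}, since $\Delta=g(H_0)$. The argument above recovers that fact and also supplies the new statement for $H_1$.
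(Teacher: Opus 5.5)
Your proof is correct and is essentially the paper's argument: since $Z^2+Z=XY/w^2$, your $V=(w+1)(Z^2+Z)$ is exactly the paper's $W=(e_1+1)e_2/e_1^2$. You therefore reach the same Artin--Schreier equation $V^2+V=(w+1)^6/w^4$ and the same trace obstruction, only via the additive derivative and a factorization of the linearized operator rather than via power sums in $e_1,e_2$, and your deduction of the fiber statements matches the paper's.
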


\begin{proof}
Let $e_1=X+Y\ne0$, $e_2=XY$. Using power sums
($p_5=e_1^5+e_1^3e_2+e_1e_2^2$, $p_6=e_1^6+e_1^2e_2^2$ in characteristic $2$),
\[
g(X)+g(Y)=e_1\bigl[(e_1{+}1)e_2^{2}+e_1^{2}e_2+(e_1{+}1)^{5}\bigr]\Big/e_1
\quad\text{vanishes iff}\quad
(e_1{+}1)e_2^{2}+e_1^{2}e_2+(e_1{+}1)^{5}=0 .
\]
If $e_1=1$ this forces $e_2=0$, i.e.\ $\{X,Y\}=\{0,1\}$, and
$\Tr(1)=1$ ($n$ odd). If $e_1\neq1$, divide by $(e_1+1)$ and substitute
$W=(e_1+1)e_2/e_1^{2}$ to get $W^{2}+W=(e_1+1)^{6}/e_1^{4}$. Since
$(e_1+1)^6/e_1^4=e_1^{2}+1+e_1^{-2}+e_1^{-4}$ and
$\Tr(e_1^{-2})=\Tr(e_1^{-4})=\Tr(e_1^{-1})$, solvability requires
$0=\Tr(e_1^2)+\Tr(1)=\Tr(e_1)+1$, i.e.\ $\Tr(e_1)=1$. As $e_2=XY$ furnishes a
solution, $\Tr(X+Y)=1$.
For a three-element fiber $\{X,Y,Z\}$ the pairwise sums add to $0$, but their
traces would add to $1+1+1=1\ne\Tr(0)$ --- impossible.
\end{proof}

\begin{proposition}[exactness criteria]\label{prop:claimD}
The following are equivalent:
(i) $g$ is exactly $2$-to-$1$ on $\Fn$ (each fiber one element of $H_0$, one
of $H_1$); (ii) $g(H_1)=g(H_0)=\Delta$; (iii) $G_1(u)=0$ for \emph{all} $u$;
(iv) every radical $V_u$ ($u\ne0$) contains an element $w$ with $T(u,w)=1$.
All four statements are proved in Theorem~\ref{claim:Cprime} below.
\end{proposition}

\begin{proof}[Proof of the equivalences]
(i)$\Leftrightarrow$(ii): by Lemma~\ref{lem:traceodd}, fibers have $\le2$
elements, one from each half; exactness says no singleton fibers, i.e.\ every
value of $g|_{H_1}$ is a value of $g|_{H_0}$ and conversely; both restrictions
are injective and $|H_0|=|H_1|$.
(ii)$\Leftrightarrow$(iii): $\sum_{t\in g(H_i)}\ch(ut)$ equals
$\tfrac12(G_0+G_1)$ for $i=0$ and $\tfrac12(G_0-G_1)$ for $i=1$; two subsets
of equal size coincide iff their Fourier transforms agree, iff $G_1\equiv0$.
(iii)$\Leftrightarrow$(iv): Lemma~\ref{lem:radical}: $G_1(u)=0$ iff
$T(u,\cdot)\not\equiv0$ on $V_u$; on the $\F_2$-space $V_u$ the function
$T(u,\cdot)$ is linear (proof of Lemma~\ref{lem:radical}), so
$\not\equiv0$ iff some $w\in V_u$ has $T=1$.
\end{proof}

\subsection{The master correspondence}

The incidence $w\in V_u\setminus\{0\}$, i.e.\ $u^3w^{16}+uw^{8}+w^2+w=0$
(divide $A_u(w)=0$ by $uw$ and multiply back), turns out to be governed by two
classical low-degree families.

\begin{theorem}[master correspondence]\label{thm:master}
Let $u\neq0$, $w\neq0$, and put $z:=uw^{4}$, $\zeta:=1+1/w$,
$\nu(x):=x^{3}+x$. Then:
\begin{enumerate}
\item[(a)] $w\in V_u\iff\nu(z)=\nu(\zeta)\iff
(z+\zeta)\bigl(z^{2}+\zeta z+\zeta^{2}+1\bigr)=0$.
\item[(b)] \emph{(Type I: $z=\zeta$.)} Equivalently $uw^{5}=w+1$; then
\[
w=\frac1{z+1},\qquad u=z(z+1)^4=z^{5}+z .
\]
\item[(c)] \emph{(Type II: $z^2+\zeta z+\zeta^2+1=0$.)} Equivalently, with
$\tau:=\bigl(u/z^{5}\bigr)^{1/4}$:
\[
\tau^{2}+\tau=1+\frac1z,\qquad u=z^{5}\tau^{4},\qquad w=\frac1{z\tau}.
\]
\item[(d)] For fixed $w\notin\{0,1\}$, the incident $u$'s number
$1+2\cdot[\Tr(1/(w{+}1))=0]$; for $w=1$ the only incident $u$ is $u=1$.
\item[(e)] \emph{(Counting.)} Double counting incidences yields
\[
\#U_3:=\#\{u\ne0: v_u=3\}=\frac{2^{\,n-1}-1}{3},\qquad
\#\{u: v_u=1\}=\frac{5\cdot2^{\,n-1}-2}{3}.
\]
\end{enumerate}
\end{theorem}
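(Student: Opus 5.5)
The plan is a direct algebraic computation for (a)--(c), followed by counting: a fixed-$w$ count for (d) and a double count of incidences for (e). The starting point is the incidence equation $u^3w^{16}+uw^8+w^2+w=0$ for $w\in V_u\setminus\{0\}$ from Lemma~\ref{lem:radical}. For (a) I will multiply it by $u$, obtaining $u^4w^{16}+u^2w^8+u(w^2+w)=0$. With $z=uw^4$ the first two terms become $z^4+z^2$, and $u=z/w^4$ turns the last term into $z(w^{-2}+w^{-3})$. Dividing by $z\ne0$ gives $z^3+z=w^{-2}+w^{-3}$. A one-line expansion shows $\nu(\zeta)=(1+w^{-1})^3+(1+w^{-1})=w^{-2}+w^{-3}$, so the incidence is $\nu(z)=\nu(\zeta)$. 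The factorization $\nu(z)+\nu(\zeta)=(z+\zeta)(z^2+\zeta z+\zeta^2+1)$ holds in characteristic $2$.

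For (b), $z=\zeta$ reads $uw^4=1+1/w$, i.e.\ $uw^5=w+1$. It gives $1/w=z+1$, and then $u=z/w^4=z(z+1)^4=z^5+z$. For (c) I set $\tau:=1/(zw)$, which agrees with $(u/z^5)^{1/4}$ because $u=z/w^4$. Then $\zeta=1+z\tau$, and substituting into $z^2+\zeta z+\zeta^2+1$ gives $z^2+z+z^2\tau+z^2\tau^2$. Dividing by $z^2$ yields $\tau^2+\tau=1+1/z$; the relations $u=z^5\tau^4$ and $w=1/(z\tau)$ are then immediate.

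For (d) I fix $w\ne0$, so $\zeta$ is fixed, and count admissible $z\ne0$. Since $u=z/w^4$ is a bijection in $z$, this counts the incident $u$. If $w=1$ then $\zeta=0$, and $z^3+z=0$ with $z\ne0$ forces $z=1$, hence $u=1$. For $w\notin\{0,1\}$, Type I gives the single value $z=\zeta\ne0$. Type II is the quadratic $z^2+\zeta z+(\zeta^2+1)=0$ with $\zeta\ne0$. After scaling $z=\zeta y$ it becomes $y^2+y=1+\zeta^{-2}$, which is solvable iff $\Tr(1)+\Tr(\zeta^{-1})=0$, i.e.\ $\Tr(1/\zeta)=1$ since $n$ is odd. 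Because $1/\zeta=w/(w+1)=1+1/(w+1)$, this is equivalent to $\Tr(1/(w+1))=0$. Three checks remain:
\begin{itemize}
\item the two roots are distinct, since $\zeta\ne0$;
\item neither root equals $\zeta$, since $\zeta$ is a root only if $\zeta^2+1=0$, i.e.\ $w=\infty$;
\item neither root is $0$, since $0$ is a root only if $\zeta=1$.
\end{itemize}
This gives $1+2[\Tr(1/(w+1))=0]$ incident $u$'s.

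For (e) I count pairs $(u,w)$ with $u\ne0$ and $w\in V_u\setminus\{0\}$ in two ways. By $u$ the count is $\sum_{u\ne0}(2^{v_u}-1)=(2^n-1)+6\,\#U_3$, using $v_u\in\{1,3\}$. By $w$, via (d), the count is $1+(2^n-2)+2\#\{w\notin\{0,1\}:\Tr(1/(w+1))=0\}$. As $w$ runs over $\Fn\setminus\{0,1\}$, $1/(w+1)$ runs over $\Fn^\ast\setminus\{1\}$. Since $\Tr(1)=1$, the trace-zero elements there number $2^{n-1}-1$, so the total is $2^{n+1}-3$. Equating the two counts gives $6\#U_3=2^n-2$, hence $\#U_3=(2^{n-1}-1)/3$, and the complement count is $2^n-1-\#U_3=(5\cdot2^{n-1}-2)/3$.

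The main obstacle is bookkeeping in (d): injectivity of $z\mapsto u$, distinctness of the Type I and Type II roots, the degenerate value $w=1$, and the trace translation. Each of these is checked explicitly above.
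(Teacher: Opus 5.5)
Your proposal is correct and follows the paper's proof essentially step for step. You use the same identity $A_u(w)=z\bigl(\nu(z)+\nu(\zeta)\bigr)$ for (a), the same substitutions $1/w=z+1$ and $\zeta=1+z\tau$ for (b) and (c), the same trace criterion for the Type II quadratic in (d) (your explicit checks that its roots are distinct, nonzero and different from $\zeta$ just spell out what the paper asserts briefly), and the same double count of incidences in (e).
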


\begin{proof}
(a) For $w\ne0$, dividing $A_u(w)=0$ by $uw$ gives the incidence relation
$u^{3}w^{15}+uw^{7}+w+1=0$ (after multiplying back by $w$).
On the other hand, multiplying $\nu(z)+\nu(\zeta)=0$ by $w^{3}$ and using
$w\zeta=w+1$, $z=uw^{4}$:
\[
w^{3}z^{3}+w^{3}z+(w\zeta)^{3}+w^{2}\cdot w\zeta
=u^{3}w^{15}+uw^{7}+(w+1)^{3}+w^{2}(w+1)
=u^{3}w^{15}+uw^{7}+w+1,
\]
since $(w+1)^{3}+w^{2}(w+1)=(w+1)\bigl[(w+1)^{2}+w^{2}\bigr]=w+1$.
So the two relations agree. The factorization of $\nu(z)+\nu(\zeta)$ is
standard: $x^{3}+x+c$ with root $\zeta$ factors as
$(x+\zeta)(x^{2}+\zeta x+\zeta^{2}+1)$.
(b) $z=\zeta$ means $uw^{4}=1+1/w$, i.e.\ $uw^{5}=w+1$; then $w(z+1)=1$ and
$u=z/w^{4}=z(z+1)^{4}=z^{5}+z$ since $(z+1)^4=z^4+1$.
(c) Note $\zeta+1=1/w$. Setting $\tau=(u/z^5)^{1/4}$ one computes
$z\tau=z\cdot u^{1/4}z^{-5/4}=(u/z)^{1/4}=(w^{-4})^{1/4}=1/w$,
so $\zeta+1=z\tau$ and $w=1/(z\tau)$. Substituting $\zeta=1+z\tau$ into
$z^{2}+\zeta z+\zeta^{2}+1=0$ gives
$z^{2}(\tau^{2}+\tau+1)+z=0$, i.e.\ $\tau^{2}+\tau=1+1/z$ (using $z\neq0$).
Conversely these relations imply the quadratic.
(d) For $w\notin\{0,1\}$: $\zeta\notin\{0,1\}$, the type-I root $z=\zeta$
gives one nonzero $u$; the quadratic has two further roots in $\Fn$ iff
$\Tr\bigl((\zeta^{2}+1)/\zeta^{2}\bigr)=0\iff\Tr(1/\zeta)=1
\iff\Tr\bigl(1/(w{+}1)\bigr)=0$
(as $1/\zeta=w/(w{+}1)=1+1/(w{+}1)$ and $\Tr(1)=1$), and these roots are
$\neq0,\zeta$. For $w=1$: $\zeta=0$, $\nu(z)=0$ has the single nonzero root
$z=1$, giving $u=1$.
(e) Each $u\neq0$ carries $2^{v_u}-1$ incidences, so
\[
\#\{v_u{=}1\}+7\,\#U_3=\sum_{u\neq0}(2^{v_u}-1)
=1+\sum_{w\neq0,1}\Bigl(1+2\bigl[\Tr(\tfrac1{w+1})=0\bigr]\Bigr)
=1+(2^{n}-2)+2(2^{n-1}-1),
\]
because $x=1/(w+1)$ ranges bijectively over $\Fn^\ast\setminus\{1\}$ and
$\#\{x\ne0,1:\Tr(x)=0\}=2^{n-1}-1$ ($\Tr(1)=1$). With
$\#\{v_u{=}1\}=2^{n}-1-\#U_3$ this gives $6\#U_3=2^{n}-2$.
\end{proof}

\subsection{The compatibility values}

\begin{theorem}[$T$-values]\label{thm:Tvalues}
With the notation of Theorem~\ref{thm:master}:
\begin{enumerate}
\item[(i)] Every type-I pair has $T(u,w)=1$; moreover
$\Tr(w)=\Tr\bigl(1/(z+1)\bigr)$ and
$\Tr\bigl(ug(w)\bigr)=1+\Tr(w)$.
\item[(ii)] Every type-II pair has $T(u,w)=\Tr(\tau)$. In particular the two
type-II partners $u=z^5\tau^4$ and $u'=z^5(\tau+1)^4$ of a given $z$ have
opposite $T$-values.
\end{enumerate}
\end{theorem}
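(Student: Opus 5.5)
For both parts I would evaluate $T(u,w)=\Tr(w)+\Tr(u\,g(w))$ directly. The plan is to substitute the rational parametrizations of Theorem~\ref{thm:master}(b),(c) into $g(w)=w(w+1)^5$, so that everything becomes a rational function of one parameter. I would then reduce traces with three tools: $\Tr(x^2)=\Tr(x)$, $\Tr(1)=1$ ($n$ odd), and a partial-fraction decomposition. No character sums are needed: $T(u,w)$ is just an explicit trace of an explicit rational expression.

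\emph{Type I.} Here $uw^5=w+1$, so
\[
u\,g(w)=uw(w+1)^5=\frac{(w+1)^6}{w^4}.
\]
With $w=1/(z+1)$ and $w+1=z/(z+1)$ this becomes $z^6/(z+1)^2$. Its trace equals $\Tr\bigl(z^3/(z+1)\bigr)$ by $\Tr(x^2)=\Tr(x)$. Polynomial division gives
\[
\frac{z^3}{z+1}=z^2+z+1+\frac1{z+1}.
\]
Hence
\[
\Tr\bigl(u g(w)\bigr)=\Tr(z^2)+\Tr(z)+\Tr(1)+\Tr\Bigl(\frac1{z+1}\Bigr)=1+\Tr(w),
\]
using $\Tr(z^2)=\Tr(z)$, $\Tr(1)=1$ and $w=1/(z+1)$. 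This gives both displayed identities of (i), and adding $\Tr(w)$ yields $T(u,w)=1$.

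\emph{Type II.} Here I would eliminate $z$ rather than $\tau$. The relation $\tau^2+\tau=1+1/z$ says $z=1/N$ with $N:=\tau^2+\tau+1$, so every quantity is a rational function of $\tau$:
\[
w=\frac1{z\tau}=\frac N\tau,\qquad 1+z\tau=\frac{(\tau+1)^2}{N},\qquad u=z^5\tau^4=\frac{\tau^4}{N^5}.
\]
This gives
\[
u\,g(w)=\frac{(1+z\tau)^5}{z\tau^2}=\frac{(\tau+1)^{10}}{N^4\tau^2},
\]
whose trace is $\Tr\bigl((\tau+1)^5/(N^2\tau)\bigr)$ after one square root. Also $\Tr(w)=\Tr(N/\tau)=\Tr(\tau)+\Tr(1)+\Tr(1/\tau)$. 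The next step is the partial-fraction expansion of $(\tau+1)^5/(\tau N^2)$ over the factors $\tau$, $N$, $N^2$. Its polynomial part is $1$, and the residue at $\tau=0$ is $1$, producing a $1/\tau$ that cancels the $1/\tau$ in $\Tr(w)$.

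The remaining terms have the form $(\alpha\tau+\beta)/N$ and $(\gamma\tau+\delta)/N^2$, and these are the main obstacle. They have to be recognised as $x^2+x$ plus constants, possibly after using $N^2=\tau^4+\tau^2+1$ and the substitution $\tau\mapsto\tau^2$. I would first try writing them as $h(\tau)^2+h(\tau)$ for some $h$ with denominator $N$, so that only constant traces survive. The target of this bookkeeping is
\[
T(u,w)=\Tr(\tau).
\]

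\emph{Partners.} The partner statement then follows directly. For fixed $z$, the equation $\tau^2+\tau=1+1/z$ has exactly the two roots $\tau$ and $\tau+1$. These give $u=z^5\tau^4$ and $u'=z^5(\tau+1)^4$, and $\Tr(\tau+1)=\Tr(\tau)+1$ because $n$ is odd, so the two $T$-values are opposite.
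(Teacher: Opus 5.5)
Your proposal is correct. Part (i) is essentially the paper's computation: the paper writes $u\,g(w)=(1+A)^6/A^4=A^{-4}+A^{-2}+1+A^2$ with $A=w$, while you divide $z^3$ by $z+1$. Part (ii), however, follows a genuinely different and shorter route.

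The step you flag as the main obstacle closes at once. The partial-fraction expansion is
\[
\frac{(\tau+1)^5}{\tau N^2}=1+\frac1\tau+\frac{\tau^2+\tau}{N^2},
\]
and since $\tau^2+\tau=N+1$, the last term equals $N^{-1}+N^{-2}$. This is exactly your $h^2+h$ with $h=1/N$, with no leftover constant, so its trace is $0$. Hence $\Tr(u\,g(w))=1+\Tr(1/\tau)$. Adding $\Tr(w)=\Tr(\tau)+1+\Tr(1/\tau)$ gives $T(u,w)=\Tr(\tau)$.

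The paper proceeds differently. It keeps both $z$ and $\tau$ and reduces powers of $\tau$ via $\tau^2=\tau+\kappa$, reaching $T=\Tr\bigl(1/(z\tau)+1/(z\tau^2)+1/\tau\bigr)$. (This agrees with your computation, since $\Tr\bigl(1/(z\tau^2)\bigr)=\Tr(N/\tau^2)=1$.) It then passes to $A=1/(z+1)$ and $B=1/\kappa$, and needs the auxiliary identity $\Tr(\alpha^{1/2}\tau)+\Tr(\alpha\tau)=\Tr(\alpha\kappa)$ to finish.

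Eliminating $z$ through $z=1/N$ makes everything a rational function of the single variable $\tau$. The whole trace evaluation then becomes one partial-fraction decomposition plus the Artin--Schreier cancellation. The paper's route stays in the $(z,\tau)$ coordinates of the master correspondence, which are the ones it reuses in the counting theorem, but it is longer and involves more intermediate substitutions.
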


\begin{proof}
(i) Write $A=1/(z+1)=w$, so $z=(1+A)/A$. Then $ug(w)=u\,w(w{+}1)^{5}$ with
$u=z(z{+}1)^{4}$ and $w+1=z/(z+1)$, so
\[
ug(w)=z(z+1)^4\cdot\frac{1}{z+1}\cdot\frac{z^{5}}{(z+1)^{5}}
=\frac{z^{6}}{(z+1)^{2}}=z^{6}A^{2}=\frac{(1{+}A)^{6}}{A^{4}}
=A^{-4}+A^{-2}+1+A^{2},
\]
using $(1+A)^{6}=(1+A^{2})(1+A^{4})$. Hence
\[
T=\Tr(w+ug(w))=\Tr(A)+\Tr(A^{-4})+\Tr(A^{-2})+\Tr(1)+\Tr(A^{2})
=\Tr(A)+0+1+\Tr(A)=1,
\]
using $\Tr(A^{-4})=\Tr(A^{-2})=\Tr(A^{-1})$ and $\Tr(A^2)=\Tr(A)$.
The remaining statements are immediate ($w=A$; $\Tr(ug(w))=T+\Tr(w)+\cdots$
i.e.\ $\Tr(ug(w))=1+\Tr(w)$ from $T=1$).
(ii) Let $\kappa:=1+1/z=\tau^{2}+\tau$, so $\tau(\tau+1)=\kappa$ and
$w=1/(z\tau)$. First,
$ug(w)=uw(w+1)^5$ and a short computation (expanding
$(1+z\tau)^{5}=(1+z\tau)(1+z^4\tau^4)$ and reducing all powers
$\tau^{2},\tau^{3}$ via $\tau^{2}=\tau+\kappa$) gives
\[
T=\Tr\Bigl(\frac1{z\tau}+\frac1{z\tau^{2}}+\frac1{\tau}\Bigr),
\]
because the terms $z^{3}\tau^{2}+z^{4}\tau^{3}$ contribute
$\Tr\bigl(z^3\tau+z^3+z\bigr)+\Tr\bigl(z^3\tau+z+z^3\bigr)=0$.
Now $\tau(\tau+1)=\kappa$ gives $1/\tau=(\tau+1)/\kappa$ and
$1/\tau^{2}=(\tau^{2}+1)/\kappa^{2}=(\tau+\kappa+1)/\kappa^{2}$. Setting
$A:=1/(z+1)$ and using $\kappa=(z+1)/z$, these become
$1/(z\tau)=A(\tau+1)$, $1/(z\tau^{2})=(z\tau+1)/(z+1)^2$ and
$1/\tau=z(\tau+1)/(z+1)$. The first and third terms of $T$ combine to
$(\tau+1)(1+z)/(z+1)=\tau+1$, and expanding the middle term,
\[
T=\Tr\bigl((\tau+1)(A^{2}+A+1)\bigr)+\Tr(A).
\]
Finally, from $\tau^{2}+\tau=\kappa$ one has, for every $\alpha$,
$\Tr(\alpha^{1/2}\tau)+\Tr(\alpha\tau)=\Tr(\alpha\kappa)$; applying this with
$\alpha=B^{2}$, $B:=A+1=1/\kappa$, gives
$\Tr(B^{2}\tau)+\Tr(B\tau)=\Tr(B)$, whence
$\Tr\bigl((B^{2}+B+1)\tau\bigr)=\Tr(B)+\Tr(\tau)$ and, using
$A^2+A+1=B^2+B+1$ and $\Tr(B^{2}+B+1)=\Tr(1)=1$,
\[
T=\bigl(\Tr(B)+\Tr(\tau)\bigr)+1+\Tr(A)=\Tr(\tau)+\Tr(A+B)+1
=\Tr(\tau)+\Tr(1)+1=\Tr(\tau).
\]
\end{proof}

\subsection{Bluher--Helleseth--Kholosha input and the classification}

By Theorem~\ref{thm:master}(b), the type-I elements of $V_u$ are in bijection
(via $z\mapsto w=1/(z+1)$) with the roots of
\begin{equation}\label{eq:bluher}
z^{5}+z+u=0
\end{equation}
--- the Bluher family $x^{q+1}+x+a$ with $q=4$. For $n$ odd
(i.e.\ $\gcd(2,n)=1$), the number $N_5(u)$ of $\Fn$-roots of
\eqref{eq:bluher} lies in $\{0,1,3\}$, with
\[
\#\{u: N_5=3\}=\frac{2^{n-1}-1}{3},\qquad
\#\{u: N_5=1\}=2^{n-1}-1,\qquad
\#\{u: N_5=0\}=\frac{2^{n}+1}{3},
\]
by the results of Bluher and Helleseth--Kholosha
\cite{Bluher,HellesethKholosha}. (Our Theorem~\ref{thm:master}(d,e) gives an
independent derivation of these counts through the $w$-side of the
correspondence.)

\begin{theorem}[classification of radicals]\label{thm:classification}
For every $u\neq0$:
\[
v_u=3\iff N_5(u)=3 ,
\]
and then $V_u\setminus\{0\}$ consists of exactly $3$ type-I and $4$ type-II
elements. If $v_u=1$, the unique radical generator is of type I iff
$N_5(u)=1$, of type II iff $N_5(u)=0$. Consequently $G_1(u)=0$ for all
$u\in U_3$, and $H=G_0$ on $U_3$.
\end{theorem}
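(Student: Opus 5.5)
The plan is to read the statement off the master correspondence by working inside the $\F_2$-space $V_u$. By Theorem~\ref{thm:master}(b), the type-I elements of $V_u\setminus\{0\}$ correspond bijectively, via $z\mapsto w=1/(z+1)$, to the roots $z\ne0,1$ of $z^5+z+u=0$. For $u\ne0$ the values $z=0$ and $z=1$ are never roots, since both give $u=0$. Hence the number of type-I elements of $V_u\setminus\{0\}$ is exactly $N_5(u)$. On the other hand $|V_u\setminus\{0\}|=2^{v_u}-1\in\{1,7\}$ by Lemma~\ref{lem:radical}.

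\textbf{Step 1: the case $v_u=1$.} Here $V_u\setminus\{0\}$ is a single element, so $N_5(u)\le1$. The unique generator is type I iff $N_5(u)=1$, and otherwise it is type II with $N_5(u)=0$. This settles the last sentence of the classification, provided we show that $N_5(u)=3$ forces $v_u=3$.

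\textbf{Step 2: the case $N_5(u)=3$.} Three distinct type-I elements lie in $V_u\setminus\{0\}$, so $2^{v_u}-1\ge3$ and hence $v_u=3$. This gives $N_5=3\Rightarrow v_u=3$.

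\textbf{Step 3: the converse, which I expect to be the main obstacle.} It follows by counting. By Theorem~\ref{thm:master}(e), $\#U_3=(2^{n-1}-1)/3$, and by Bluher--Helleseth--Kholosha, $\#\{N_5=3\}=(2^{n-1}-1)/3$ as well. Step 2 gives the inclusion $\{N_5=3\}\subseteq U_3$, so the two sets coincide. A direct structural argument could also be attempted, for instance by showing that type-II elements come in pairs whose sum is type I. The counting route, however, is clean and uses only results already stated.

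\textbf{Step 4: composition of a three-dimensional radical.} When $v_u=3$ we have $N_5(u)=3$, so exactly $3$ of the $7$ nonzero elements of $V_u$ are type I. The remaining $4$ are nonzero elements of $V_u$ that are not of type I, and by Theorem~\ref{thm:master}(a) they are type II. Type I and type II could overlap only when $z=\zeta$ also satisfies $3\zeta^2+1=\zeta^2+1=0$, that is $\zeta=1$, which means $1/w=0$ and is impossible.

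\textbf{Step 5: the consequences for $G_1$ and $H$.} For $u\in U_3$, each type-I element $w$ has $T(u,w)=1$ by Theorem~\ref{thm:Tvalues}(i). Hence $T(u,\cdot)\not\equiv0$ on $V_u$, and Lemma~\ref{lem:radical} gives $G_1(u)=0$. Therefore $H(u)=G_0(u)+G_1(u)=G_0(u)$.
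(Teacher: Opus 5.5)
Your proposal is correct and follows the paper's proof essentially step for step. You derive $\{N_5=3\}\subseteq U_3$ from three distinct type-I radical elements, and get equality by comparing the Bluher--Helleseth--Kholosha count with Theorem~\ref{thm:master}(e). You then obtain the $3+4$ split, and deduce $G_1(u)=0$ from the type-I value $T=1$ via Lemma~\ref{lem:radical}. Your explicit check that types I and II cannot overlap (an overlap would force $\zeta=1$, i.e.\ $1/w=0$) is a small detail the paper leaves implicit.
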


\begin{proof}
$z\mapsto w=(z/u)^{1/4}$ is injective, so $N_5(u)=3$ produces three distinct
nonzero elements of $V_u$, forcing $v_u=3$. Thus
$\{u:N_5(u)=3\}\subseteq U_3$; since both sets have cardinality
$(2^{n-1}-1)/3$ (Bluher--Helleseth--Kholosha resp.\
Theorem~\ref{thm:master}(e)), they are equal. The type split follows:
$N_5(u)=3$ type-I elements and $7-3=4$ type-II. For $v_u=1$ the statement is
the definition of the types. Finally, on $U_3$ the radical contains a type-I
element, which has $T=1$ (Theorem~\ref{thm:Tvalues}); by
Lemma~\ref{lem:radical}, $G_1(u)=0$.
\end{proof}

\subsection{The counting theorem: $G_1\equiv0$}

We now prove the statement that was isolated, in an earlier stage of this
project, as ``Claim $\mathrm{C}'$''. The key observation is that
$T(u,\cdot)$ is \emph{$\F_2$-linear on $V_u$}: for $w,w'\in V_u$, evaluating
the factorization in the proof of Lemma~\ref{lem:radical} at $s=w'$ gives
$(-1)^{T(u,w+w')}=(-1)^{T(u,w)}(-1)^{T(u,w')}$, and $T(u,0)=0$.
Call a type-II pair $(u,w)$ \emph{null} if $T(u,w)=\Tr(\tau)=0$.

\begin{theorem}[key counting theorem]\label{claim:Cprime}
Let $n$ be odd, $k=2$. For every $u\neq0$ with $v_u=1$ the unique radical
pair has $T(u,w_u)=1$; moreover no $u\in U_3$ has $T(u,\cdot)\equiv0$ on
$V_u$. Consequently:
\begin{enumerate}
\item[(i)] $G_1(u)=0$ for \emph{every} $u\in\Fn$;
\item[(ii)] $g(x)=x(x+1)^5=\MCM$ is exactly $2$-to-$1$ on $\Fn$, each fiber
containing one element of $H_0$ and one of $H_1$; in particular
$\MCM(H_0)=\MCM(H_1)=\Delta$;
\item[(iii)] $2S(u)=H(u)=G_0(u)=\sum_{s\in\Fn}\ch\bigl(u\,g(s)\bigr)$ for all
$u\neq0$.
\end{enumerate}
The proof uses only Theorems~\ref{thm:master} and~\ref{thm:Tvalues}; in
particular it is independent of the Bluher--Helleseth--Kholosha input used in
Theorem~\ref{thm:classification}.
\end{theorem}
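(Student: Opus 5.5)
The plan is a pure double count of ``null'' incidences, using only the master correspondence (Theorem~\ref{thm:master}) and the $T$-values (Theorem~\ref{thm:Tvalues}). Call $u\neq0$ \emph{bad} if $T(u,\cdot)\equiv0$ on $V_u$. Since type-I pairs always have $T=1$, every $w\neq0$ with $T(u,w)=0$ is of type~II, so it is a null type-II pair. Both assertions of the theorem say exactly that there are no bad $u$. Items (i)--(iii) then follow formally:
\begin{itemize}
\item no bad $u$ is statement (iv) of Proposition~\ref{prop:claimD}, hence (iii) there, i.e.\ $G_1(u)=0$ for $u\ne0$ via Lemma~\ref{lem:radical};
\item $G_1(0)=\sum_s\ch(s)=0$;
\item (ii) is the equivalence (i)$\Leftrightarrow$(iii) of Proposition~\ref{prop:claimD};
\item (iii) is $H=G_0+G_1$.
\end{itemize}

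\emph{Step 1: count all incidences and the type-I ones.} From the proof of Theorem~\ref{thm:master}(e), the total number of pairs $(u,w)$ with $u\ne0$ and $w\in V_u\setminus\{0\}$ is
\[
1+(2^n-2)+2(2^{n-1}-1)=2^{n+1}-3 .
\]
Type-I pairs correspond bijectively to $z\notin\{0,1\}$ via $w=1/(z+1)$ and $u=z^5+z$; here $z=0$ gives $u=0$ and $z=1$ gives no $w$. So there are $2^n-2$ of them, and hence $2^n-1$ type-II pairs.

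\emph{Step 2: count the null type-II pairs exactly.} For a fixed $z$, the type-II partners are $u=z^5\tau^4$ and $u'=z^5(\tau+1)^4$, where $\tau,\tau+1$ are the roots of $\tau^2+\tau=1+1/z$. By Theorem~\ref{thm:Tvalues}(ii) they have opposite $T$-values, so type-II pairs split into null/non-null in matched pairs. The exception is a root $\tau=0$, which gives $u'=0$ and is excluded; this happens only for $z=1$. There the surviving pair is $(u,w)=(1,1)$ with $\tau=1$ and $\Tr(1)=1$, which is non-null. Removing it leaves $2^n-2$ type-II pairs, perfectly matched. Hence
\[
\#\{\text{null type-II pairs}\}=2^{n-1}-1 .
\]
I expect this bookkeeping to be the delicate point. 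One must check that every type-II pair arises from exactly one $(z,\tau)$ with $u,w\ne0$, and that the only degenerate configuration is $z=1$; I would verify this directly from Theorem~\ref{thm:master}(c),(d).

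\emph{Step 3: lower-bound the same quantity $u$ by $u$.} Since $T(u,\cdot)$ is $\F_2$-linear on $V_u$, the cases are as follows.
\begin{itemize}
\item If $v_u=3$, the kernel of $T(u,\cdot)$ contributes either $3$ null elements (when $T\not\equiv0$) or $7$ (when $u$ is bad).
\item If $v_u=1$, the generator contributes $1$ null element if $u$ is bad and $0$ otherwise.
\end{itemize}
Summing over $u$ and using $\#U_3=(2^{n-1}-1)/3$ from Theorem~\ref{thm:master}(e) gives
\[
2^{n-1}-1=3\#U_3+4\,\#\{\text{bad }u\in U_3\}+\#\{\text{bad }u,\ v_u=1\}
=2^{n-1}-1+(\text{nonnegative terms}).
\]
Hence both bad counts vanish, which is exactly the theorem. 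No Bluher--Helleseth--Kholosha input enters, as claimed.
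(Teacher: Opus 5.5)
Your proposal is correct and is essentially the paper's proof. It uses the same global count of exactly $2^{n-1}-1$ null type-II pairs, obtained from the $\tau\leftrightarrow\tau+1$ pairing with the lone unpaired pair $(1,1)$ being non-null, and the same local count $x+3(\#U_3-y)+7y$ that forces both bad counts to vanish. The only cosmetic difference is how you get the total of $2^n-1$ type-II pairs: you subtract the $2^n-2$ type-I pairs from the $2^{n+1}-3$ incidences of Theorem~\ref{thm:master}(e), whereas the paper counts the $z$ with $\Tr(1/z)=1$ directly.
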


\begin{proof}
\emph{Global count of null pairs.} By Theorem~\ref{thm:master}(c), type-II
pairs are indexed by $(z,\tau)$ with $\tau^2+\tau=1+1/z$, solvable iff
$\Tr(1+1/z)=0$, i.e.\ $\Tr(1/z)=1$ ($n$ odd, $\Tr(1)=1$). For $z=1$ the roots
are $\tau\in\{0,1\}$; $\tau=0$ gives $u=0$ (excluded) and $\tau=1$ gives the
pair $(1,1)$ with $T=\Tr(1)=1$: not null. For each of the $2^{n-1}-1$
admissible $z\neq1$ the two roots $\tau,\tau+1$ give two pairs (with distinct
$u$'s, since $z^5\tau^4\neq z^5(\tau+1)^4$) whose $T$-values are
$\Tr(\tau)$ and $\Tr(\tau)+1$: exactly \emph{one} null pair per $z$. Hence
the total number of null pairs is exactly $2^{n-1}-1$.

\emph{Local count.} Let $x=\#\{u: v_u=1,\ \text{its pair is null}\}$ and
$y=\#\{u\in U_3:\ T(u,\cdot)\equiv0\text{ on }V_u\}$. A $v_u=1$ element
contributes one null pair iff it is counted by $x$ (a type-I pair has $T=1$
by Theorem~\ref{thm:Tvalues}(i)). For $u\in U_3$ with $T(u,\cdot)\not\equiv0$,
linearity makes $\ker T(u,\cdot)$ an index-$2$ subgroup of $V_u\cong\F_2^3$:
exactly $3$ nonzero elements have $T=0$, and they are all of type II (type-I
elements have $T=1$): exactly $3$ null pairs. For $u\in U_3$ with
$T(u,\cdot)\equiv0$, all $7$ pairs are null. Therefore, using
$\#U_3=(2^{n-1}-1)/3$ (Theorem~\ref{thm:master}(e)),
\[
2^{n-1}-1\;=\;x+3(\#U_3-y)+7y\;=\;x+(2^{n-1}-1)+4y ,
\]
whence $x+4y=0$ and $x=y=0$. This proves the first two assertions
($T(u,w_u)=1$ on $v_u=1$: type-I pairs by Theorem~\ref{thm:Tvalues}(i),
type-II pairs because they are not null).

(i) For $v_u=1$, $T(u,\cdot)$ attains $1$, and for $u\in U_3$,
$T(u,\cdot)\not\equiv0$ ($y=0$); in both cases Lemma~\ref{lem:radical} gives
$G_1(u)=0$; finally $G_1(0)=\sum_s\ch(s)=0$.
(ii) follows from (i) via Proposition~\ref{prop:claimD}
((iii)$\Rightarrow$(i),(ii) there), and (iii) from $2S=G_0+G_1$
(Subsection~\ref{sec:k2}.1) together with (i).
\end{proof}

\subsection{Consequences: the complete description of $S$ for $k=2$}

\begin{theorem}[support and values]
\label{thm:k2support}
Let $n$ be odd and $u\ne0$. Then $2S(u)=H(u)=G_0(u)$ and:
\begin{enumerate}
\item[(i)] If $v_u=1$ with radical generator $w_u$:
\[
H(u)=\pm2^{(n+1)/2}\ \text{if}\ \Tr(w_u)=1,\qquad H(u)=0\ \text{otherwise.}
\]
In the type-I case ($N_5(u)=1$, root $z$): $\Tr(w_u)=\Tr\bigl(1/(z+1)\bigr)$.
\item[(ii)] If $u\in U_3$:
$H(u)=\pm2^{(n+3)/2}$ if $\Tr(u\,g(w))=0$ for all $w\in V_u$, else $H(u)=0$.
\item[(iii)] The spectrum of $2S$ is contained in
$\{0,\pm2^{(n+1)/2},\pm2^{(n+3)/2}\}$, in accordance with
Table~\ref{tab:spectra}.
\end{enumerate}
\end{theorem}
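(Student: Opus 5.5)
The plan is to read everything off Lemma~\ref{lem:radical}, with $\varepsilon=0$, after using Theorem~\ref{claim:Cprime}(iii) to identify $2S(u)=H(u)=G_0(u)$ for $u\neq0$. By Lemma~\ref{lem:radical}, $G_0(u)\in\{0,\pm2^{(n+v_u)/2}\}$. It is nonzero exactly when $\Tr(u\,g(w))=0$ for all $w\in V_u$, and $v_u\in\{1,3\}$. Case (ii), $u\in U_3$, is then literally the criterion \eqref{eq:compat} with $\varepsilon=0$ and $v_u=3$; nothing more is needed there.

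For case (i), $v_u=1$ and $V_u=\{0,w_u\}$. The compatibility condition reduces to the single equation $\Tr(u\,g(w_u))=0$. I will then use the key fact from Theorem~\ref{claim:Cprime}, namely that the unique radical pair has $T(u,w_u)=1$. Since $T(u,w)=\Tr(w)+\Tr(u\,g(w))$, this gives $\Tr(u\,g(w_u))=1+\Tr(w_u)$. Hence $G_0(u)\neq0$ iff $\Tr(w_u)=1$, and in that case $|G_0(u)|=2^{(n+1)/2}$.

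For the type-I refinement, suppose $N_5(u)=1$ with root $z$. By Theorem~\ref{thm:classification} the generator is then of type I. By Theorem~\ref{thm:master}(b) it equals $w_u=1/(z+1)$, so Theorem~\ref{thm:Tvalues}(i) gives $\Tr(w_u)=\Tr(1/(z+1))$ directly. If one prefers to avoid the Bluher--Helleseth--Kholosha input, it is enough to phrase this as follows: whenever the generator is of type I with parameter $z$, one has $\Tr(w_u)=\Tr(1/(z+1))$.

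Part (iii) follows by collecting the possible values: $u=0$ gives $2S(0)=2^n$, which is excluded since $u\ne0$. For $u\neq0$, cases (i) and (ii) give values in $\{0,\pm2^{(n+1)/2},\pm2^{(n+3)/2}\}$, because $v_u\in\{1,3\}$. There is no genuine obstacle. The only point needing care is that the sign $T(u,w_u)=1$ must hold for every $v_u=1$ radical, including the type-II ones. That is exactly the content of the counting argument in Theorem~\ref{claim:Cprime}, so I will cite it rather than reprove it.
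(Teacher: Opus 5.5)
Your proposal is correct and follows the paper's own argument essentially step by step: $H=G_0$ from $G_1\equiv0$ (Theorem~\ref{claim:Cprime}), the value set and support criterion from Lemma~\ref{lem:radical} with $\varepsilon=0$, and, for $v_u=1$, the conversion $\Tr(u\,g(w_u))=1+\Tr(w_u)$ via $T(u,w_u)=1$. Your explicit handling of the type-I refinement through Theorem~\ref{thm:master}(b) fills in a step the paper leaves implicit. Moreover, under the standing hypothesis $v_u=1$ the root $z$ already yields the nonzero radical element $1/(z+1)$, which must equal $w_u$, so the Bluher--Helleseth--Kholosha input is not needed there.
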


\begin{proof}
By Theorem~\ref{claim:Cprime}, $G_1\equiv0$, so $H=G_0$. On $v_u=1$: by
\eqref{eq:compat}, $G_0(u)\neq0$ iff $\Tr(ug(w_u))=0$; since $T(u,w_u)=1$
(Theorem~\ref{claim:Cprime}), $\Tr(ug(w_u))=1+\Tr(w_u)$, which vanishes iff
$\Tr(w_u)=1$. The value is $\pm2^{(n+1)/2}$ by Lemma~\ref{lem:radical}.
(ii) is \eqref{eq:compat} for $v=3$, and (iii) collects the cases.
\end{proof}

\subsection{Elimination of the signs: reduction to an exact root count}

At this point one could try to determine the signs of $H=G_0$ (Arf
invariants of the pencil $Q_u$) and prove the cancellation
$\sum_{\mu}H(\mu)H(\mu\rho)H(\mu\sigma)=0$ directly; our experiments show the
sign function obeys no monomial-character law, so this route is hard.
It is also unnecessary: a monomial change of variables removes the signs
altogether. Recall that for $n$ odd all of $2,3,5,6$ are invertible modulo
$2^n-1$, so fractional exponents such as $5/3:=5\cdot3^{-1}$ define power
\emph{bijections} of $\Fn$ (fixing $0$).

\begin{theorem}[monomial reduction]\label{thm:monred}
Let $n$ be odd, $\rho\notin\{0,1\}$, $\sigma=1+\rho$, and put
$a=\rho^{1/6}$, $b=\sigma^{1/6}$ (so $a,b\notin\{0,1\}$ and $a^6+b^6=1$).
Then, with $Z(\rho)$ as in Proposition~\ref{prop:red1},
\begin{equation}\label{eq:rootred}
Z(\rho)\;=\;2^{2n-3}\,\bigl(r(a,b)-1\bigr),\qquad
r(a,b):=\#\bigl\{\tau\in\Fn:\ (a\tau^5+b)^3=(\tau^3+1)^5\bigr\}.
\end{equation}
Moreover the map $\rho\mapsto(a,b)$ is a bijection from
$\Fn\setminus\{0,1\}$ onto $\{(a,b):a,b\notin\{0,1\},\,a^6+b^6=1\}$.
\end{theorem}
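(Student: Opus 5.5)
The plan is to start from the character form of $Z(\rho)$ and use Theorem~\ref{claim:Cprime}(iii), $2S(u)=G_0(u)=\sum_{s}\ch(u\,g(s))$ for all $u\neq0$, with $g(s)=s(s+1)^5$. Substituting this into $Z(\rho)=\sum_{\lambda\ne0}S(\lambda)S(\lambda\rho)S(\lambda\sigma)$ and summing over $\lambda$ by orthogonality gives
\[
8Z(\rho)=2^n\,C_g(\rho)-2^{3n},\qquad C_g(\rho):=\#\{(s_1,s_2,s_3)\in\Fn^3:\ g(s_1)+\rho\,g(s_2)+\sigma\,g(s_3)=0\}.
\]
This is the analogue of Proposition~\ref{prop:red2}, now over all of $\Fn$ rather than $H_0$, which is allowed because $g$ is exactly $2$-to-$1$ with $g(H_0)=g(H_1)=\Delta$. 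The claim \eqref{eq:rootred} is therefore equivalent to
\[
C_g(\rho)=2^{2n}+2^n\bigl(r(a,b)-1\bigr).
\]
This has exactly the shape of the Monomial Lemma~\ref{lem:monomial}: a main term $2^{2n}$, plus $2^n$ times the number of points on a one-parameter fibre, minus one. The goal is therefore to manufacture a scaling fibration for $g$.

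\textbf{Step 1: homogenize $g$ and absorb the coefficients.} Write $s=X/(X+Y)$, so that $s+1=Y/(X+Y)$ and $g=XY^5/(X+Y)^6$, a ratio of forms of degree $6$. Since $6$ is invertible modulo $2^n-1$ ($n$ odd), we have $\rho=a^6$ and $\sigma=b^6$ with unique $a,b$. Then $\rho\,g(s_2)$ and $\sigma\,g(s_3)$ become values of the same degree-$6$ rational function after the coordinate rescalings $X\mapsto aX$, $Y\mapsto aY$, and similarly with $b$. The relation $1+\rho+\sigma=0$ becomes $1+a^6+b^6=0$, i.e.\ $a^6+b^6=1$. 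This also gives the bijectivity statement at the end: $a,b\notin\{0,1\}$ corresponds to $\rho\notin\{0,1\}$, and conversely $\rho:=a^6$ recovers $\rho$ and $\sigma=1+\rho$ uniquely.

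\textbf{Step 2: fibre the triple count by a scaling.} Following the proof of Lemma~\ref{lem:monomial}, I would use the $2$-to-$1$ property to parametrize each value $g(s_i)$ by a representative point on a fixed curve. Then I would pull out a common scalar $\lambda$ (one of the three summands, or the ratio of two of them) using the multiplicativity of the power maps $x\mapsto x^{5/3},x^{1/6}$, which are bijections for $n$ odd. The aim is to reduce the two-dimensional family of solutions to a single affine variable $\tau$. Concretely, I expect the substitution to send the condition to
\[
a\tau^5+b=\lambda\,\Theta,\qquad \tau^3+1=\lambda'\,\Theta',
\]
with the exponents $3$ and $5$ coming from $g(s)=s(s+1)^5$ after taking $6$th roots of the homogeneous pieces. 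Eliminating the scalars gives $(a\tau^5+b)^3=(\tau^3+1)^5$. Each root $\tau$ should contribute a full line of $2^n$ triples, i.e.\ $2^n r(a,b)$ in total. The degenerate triples, those with some $g(s_i)=0$ (i.e.\ $s_i\in\{0,1\}$) or with $\tau$ at a pole, should account for the main term $2^{2n}-2^n$, exactly as the $a=0$ terms do in Lemma~\ref{lem:monomial}.

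\textbf{Main obstacle.} The hardest step is finding the correct monomial substitution in Step~2 and checking that it is a genuine bijection. It must be bijective including on the boundary cases $s\in\{0,1\}$, $\tau^3=1$ and $a\tau^5=b$, where division by zero or collapse of fibres could shift the count by a multiple of $2^n$. I would first try parametrizing the Fermat-type curve $W^3+T^3=1$, which is hinted at by the final equivalent form in the summary, with $W$ and $T$ built from $s^{1/3}$ and $(s+1)^{1/3}$. I would then verify the bookkeeping of the exceptional points numerically on $n=5,7$ before trusting the general constant term. As a consistency check, summing \eqref{eq:rootred} over $\rho$ and comparing with Remark~\ref{rem:average} must give $\sum_{(a,b)}(r(a,b)-1)=0$. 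This pins the additive constant and would detect an off-by-$2^n$ error in the degenerate contributions.
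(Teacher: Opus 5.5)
Your preliminary reduction is correct: Theorem~\ref{claim:Cprime}(iii) gives $8Z(\rho)=2^nC_g(\rho)-2^{3n}$, and the bijectivity of $\rho\mapsto(a,b)$ is fine. But Step~2, which you yourself flag as the main obstacle, is the whole content of the theorem. Moreover, the place you propose to look for it, a scaling fibration of the point count $C_g$, cannot work as stated.

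After $s\mapsto s+1$ and $x=s^3$, $g$ becomes $x^2+x^{5/3}$, which has two monomials of different degrees. So the solution set of $g(s_1)+\rho g(s_2)+\sigma g(s_3)=0$ is not preserved by any dilation, and no power-map change of variables fixes this.

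Your Step~1 does not help either. $XY^5/(X+Y)^6$ is homogeneous of degree $0$, so $X\mapsto aX$, $Y\mapsto aY$ leaves it unchanged and cannot produce the factor $a^6=\rho$.

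The bookkeeping analogy with Lemma~\ref{lem:monomial} is also off. There the ``$-1$'' comes from $\sum_{u\neq0}\ch(uP)=2^n[P=0]-1$ inside a character sum, and the $a=0$ terms contribute $0$. In the count $C_g$, the $2^{2n}$ is the generic equidistribution term; the degenerate triples with some $s_i\in\{0,1\}$ number only $O(2^n)$ and cannot supply it.

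Finally, the average identity can only serve as a numerical sanity check, not to pin an additive constant. Doing that would require knowing $\sum_\rho r(a,b)$ independently. Theorem~\ref{thm:k2main} later uses that identity precisely to deduce $r\equiv1$, so the calibration would be circular.

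The missing idea is to stay on the character side, where the inhomogeneity disappears under the trace. Since $\Tr(\mu s^6)=\Tr(\mu^{1/2}s^3)$ is \emph{linear} in $x=s^3$, the dilation $x\mapsto\mu^{-1/2}x$ gives
\[
H(\mu)=\sum_x\ch\bigl(\mu x^{5/3}+\mu^{1/2}x\bigr)=W\bigl(\mu^{1/6}\bigr),\qquad W(c):=\sum_x\ch\bigl(c\,x^{5/3}+x\bigr).
\]
This, not a homogenization of $g$, is where $a=\rho^{1/6}$ and $b=\sigma^{1/6}$ come from. Then $8Z(\rho)=\sum_c W(c)W(ca)W(cb)$. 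Orthogonality in $c$, followed by $X=x^{5/3}$ and similarly for the other variables, turns this into
\[
2^n\sum_{X+aY+bZ=0}\ch\bigl(X^{3/5}+Y^{3/5}+Z^{3/5}\bigr),
\]
a sum over a \emph{homogeneous} plane with a monomial phase. That is exactly where the Monomial-Lemma slicing $Y=tZ$ applies. It yields $2^{2n}\bigl(\#\{t:(at+b)^{3/5}+t^{3/5}+1=0\}-1\bigr)$, and the substitution $t=\tau^5$ finishes. The fibration you are looking for lives on this dual plane, not on the solution set counted by $C_g$.
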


\begin{proof}
Write $e=5\cdot3^{-1}$, $f=3\cdot5^{-1}$ (so $ef\equiv1$).
\emph{Step 1: $H$ is a Walsh coefficient of the monomial $x^{e}$.}
By Theorem~\ref{claim:Cprime}(iii) and the substitution $s\mapsto s+1$
(which turns $g(s)=s(s{+}1)^5$ into $s^5(s{+}1)=s^6+s^5$),
\[
H(\mu)=\sum_s\ch\bigl(\mu(s^6+s^5)\bigr)
\;\overset{x=s^3}{=}\;\sum_x\ch\bigl(\Tr(\mu x^{e}+\mu^{1/2}x)\bigr)
\;\overset{x=\mu^{-1/2}x'}{=}\;W\bigl(\mu^{1/6}\bigr),
\]
where $W(c):=\sum_x\ch\bigl(\Tr(cx^{e}+x)\bigr)$; here we used
$\Tr(\mu s^6)=\Tr(\mu^{1/2}s^3)$, $s^5=(s^3)^{e}$, and
$\mu\cdot\mu^{-5/6}=\mu^{1/6}$.
\emph{Step 2: triple correlation.} Substituting $c=\mu^{1/6}$ (a bijection)
and using $W(0)=0$,
\[
8Z(\rho)=\sum_{\mu\neq0}H(\mu)H(\mu\rho)H(\mu\sigma)
=\sum_{c\in\Fn}W(c)\,W(ca)\,W(cb).
\]
Expanding the three Walsh sums and summing over $c$ (orthogonality), then
substituting $X=x^{e}$, $Y=y^{e}$, $Z=z^{e}$:
\[
\sum_{c}W(c)W(ca)W(cb)
=2^n\!\!\!\sum_{\substack{x,y,z\\ x^{e}+ay^{e}+bz^{e}=0}}\!\!\!\ch(x{+}y{+}z)
=2^n\!\!\!\sum_{\substack{X,Y,Z\\ X+aY+bZ=0}}\!\!\!\ch\bigl(X^{f}{+}Y^{f}{+}Z^{f}\bigr).
\]
\emph{Step 3: slicing the plane.} Parametrize by free $(Y,Z)$,
$X=aY+bZ$. The line $Z=0$ contributes
$\sum_Y\ch\bigl((a^{f}+1)Y^{f}\bigr)=0$ since $a^{f}\neq1$ ($a\neq1$ and
$x\mapsto x^{f}$ is injective). For $Z\neq0$ put $Y=tZ$; by full
multiplicativity of power maps the summand is $\ch\bigl(Z^{f}P(t)\bigr)$
with $P(t)=(at+b)^{f}+t^{f}+1$, and
$\sum_{Z\neq0}\ch(Z^{f}P(t))=2^n[P(t){=}0]-1$. Hence
$\sum_c W(c)W(ca)W(cb)=2^{2n}\bigl(\#\{t:P(t)=0\}-1\bigr)$.
\emph{Step 4: clearing the fractional exponents.} $t=\tau^5$ is a bijection,
and $P(\tau^5)=0$ iff $(a\tau^5+b)^{f}=\tau^3+1$ iff
$(a\tau^5+b)^3=(\tau^3+1)^5$ (raise to the $5$th power, a bijection). This
gives \eqref{eq:rootred}. The parameter map is bijective because
$x\mapsto x^{6}$ is; $a,b\neq0,1$ correspond to $\rho,\sigma\neq0,1$.
\end{proof}

\subsection{Proof of the conjecture for $k=2$}

\begin{lemma}[explicit root]\label{lem:exproot}
For all $a,b\notin\{0,1\}$ with $a^6+b^6=1$, the element $\tau_0=a/b$
satisfies $(a\tau_0^5+b)^3=(\tau_0^3+1)^5$. Hence $r(a,b)\geq1$ always.
\end{lemma}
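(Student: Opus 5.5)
The plan is to substitute $\tau_0=a/b$ into both sides and simplify, using the curve equation $a^6+b^6=1$ and the fact that squaring is additive in characteristic $2$. The hypothesis $b\notin\{0,1\}$ gives in particular $b\neq0$, so $\tau_0$ is a well-defined element of $\Fn$. The final assertion $r(a,b)\ge1$ then follows at once from the definition of $r(a,b)$ in Theorem~\ref{thm:monred}.

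\emph{Left-hand side.} First compute
\[
a\tau_0^5+b=\frac{a^6}{b^5}+b=\frac{a^6+b^6}{b^5}=\frac{1}{b^5},
\]
so that $(a\tau_0^5+b)^3=b^{-15}$.

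\emph{Right-hand side.} Next compute $\tau_0^3+1=(a^3+b^3)/b^3$, so that
\[
(\tau_0^3+1)^5=\frac{(a^3+b^3)^5}{b^{15}} .
\]
The identity to prove is therefore equivalent to $(a^3+b^3)^5=1$.

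\emph{The key observation.} In characteristic $2$ we have $(a^3+b^3)^2=a^6+b^6=1$. Since squaring is injective on $\Fn$, this forces $a^3+b^3=1$, and hence $(a^3+b^3)^5=1$. This closes the argument.

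There is no genuine obstacle. The only point needing care is to invoke the Frobenius identity $(x+y)^2=x^2+y^2$ together with injectivity of squaring, which is what turns the sextic curve equation into the cubic relation $a^3+b^3=1$. As the summary in Section~\ref{sec:conj} notes, this is the same as saying that $(a^2,b^2)$ lies on the Fermat cubic $x^3+y^3=1$.
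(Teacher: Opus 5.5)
Your proof is correct and is essentially the paper's argument. The paper also first obtains $a^3+b^3=1$ from $a^6+b^6=1$ by uniqueness of square roots. It then computes $a\tau_0^5+b=b^{-5}$ and $\tau_0^3+1=b^{-3}$, so both sides equal $b^{-15}$.
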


\begin{proof}
Note $a^3+b^3=(a^6+b^6)^{1/2}=1$ (square roots are unique). Then
\[
a\tau_0^5+b=\frac{a^6+b^6}{b^5}=\frac1{b^5},\qquad
\tau_0^3+1=\frac{a^3+b^3}{b^3}=\frac1{b^3},
\]
and both sides of the equation equal $b^{-15}$.
\end{proof}

\begin{theorem}[the conjecture holds for $k=2$]\label{thm:k2main}
Let $k=2$ and $\gcd(2,n)=1$. Then for every pair of distinct nonzero
$v_1,v_2\in\Fn$,
\[
\bigl|\{(x,y,z)\in\Delta^3:\ v_1x+v_2y+(v_1+v_2)z=0\}\bigr|\;=\;2^{2n-3},
\]
i.e.\ Conjecture~\ref{conj:main} holds for $k=2$. Equivalently,
$r(a,b)=1$ for every admissible $(a,b)$: the degree-$15$ polynomial
$(a\tau^5+b)^3+(\tau^3+1)^5$ has $\tau_0=a/b$ as its \emph{unique}
$\Fn$-rational root.
\end{theorem}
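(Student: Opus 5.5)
The plan is to combine the exact root-count formula of Theorem~\ref{thm:monred} with the lower bound of Lemma~\ref{lem:exproot} and the unconditional average identity of Remark~\ref{rem:average}. No further information about the polynomial $(a\tau^5+b)^3+(\tau^3+1)^5$ should be needed. The point is that a quantity which is bounded below by its mean, and whose mean is known exactly, must be constant.

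\textbf{Step 1: $Z$ is nonnegative.} Fix $n$ odd (forced by $\gcd(2,n)=1$) and $\rho\notin\{0,1\}$, and set $a=\rho^{1/6}$, $b=(1+\rho)^{1/6}$. By Theorem~\ref{thm:monred}, $Z(\rho)=2^{2n-3}\bigl(r(a,b)-1\bigr)$, with $(a,b)$ admissible, i.e.\ $a,b\notin\{0,1\}$ and $a^6+b^6=1$. Lemma~\ref{lem:exproot} gives $r(a,b)\ge1$, so $Z(\rho)\ge0$ for every $\rho\notin\{0,1\}$.

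\textbf{Step 2: $Z$ sums to zero.} By Proposition~\ref{prop:red1}, $N(1,\rho)=2^{2n-3}+2^{-n}Z(\rho)$. Remark~\ref{rem:average} shows that the average of $N(1,\rho)$ over the $2^n-2$ values $\rho\ne0,1$ is exactly $2^{2n-3}$, and it uses nothing specific to $k$. Summing the identity over $\rho$ therefore gives
\[
\sum_{\rho\ne0,1}Z(\rho)=0 .
\]
A sum of nonnegative terms that vanishes has every term zero, so $Z(\rho)=0$ for all $\rho\notin\{0,1\}$. By Proposition~\ref{prop:red1} this is exactly Conjecture~\ref{conj:main} for $k=2$. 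For arbitrary distinct nonzero $v_1,v_2$ one uses $N(v_1,v_2)=N(1,v_2/v_1)$.

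\textbf{Step 3: the root-count reformulation.} Reading the argument backwards gives the equivalent statement: $Z(\rho)=0$ forces $r(a,b)=1$. Theorem~\ref{thm:monred} says $\rho\mapsto(a,b)$ is a bijection onto the admissible parameters, so every admissible $(a,b)$ has $\tau_0=a/b$ as the unique $\Fn$-rational root. Combining with Lemma~\ref{lem:frobenius} extends the result to $k\equiv-2\pmod n$.

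\textbf{Main obstacle.} I expect the only delicate point to be confirming that all ingredients apply uniformly. Theorem~\ref{thm:monred} relies on $G_1\equiv0$, i.e.\ Theorem~\ref{claim:Cprime}, for every $\rho$. The average identity must be genuinely unconditional; Remark~\ref{rem:average} is a pure double count valid for any set of size $2^{n-1}$. Given these, the positivity-plus-average argument is immediate and avoids any Arf-invariant or sign analysis.
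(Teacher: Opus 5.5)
Your proposal is correct and follows the paper's proof essentially verbatim: Theorem~\ref{thm:monred} together with the explicit root $\tau_0=a/b$ gives $Z(\rho)=2^{2n-3}\bigl(r(a,b)-1\bigr)\ge0$, and the unconditional average identity of Remark~\ref{rem:average} gives $\sum_{\rho\ne0,1}Z(\rho)=0$, which forces $r\equiv1$ and $Z\equiv0$. Your observation that the average identity is a pure double count using only $|\Delta|=2^{n-1}$ is accurate, and it is exactly why this argument needs no sign or Arf-invariant information.
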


\begin{proof}
By Remark~\ref{rem:average},
$\sum_{\rho\neq0,1}N(1,\rho)=(2^n-2)\,2^{2n-3}$ unconditionally, so
$\sum_{\rho\neq0,1}Z(\rho)=0$ by Proposition~\ref{prop:red1}. By
Theorem~\ref{thm:monred} this sum equals
$2^{2n-3}\sum_{(a,b)}\bigl(r(a,b)-1\bigr)$ over the admissible parameters,
and by Lemma~\ref{lem:exproot} each term is $\geq0$. A sum of nonnegative
integers vanishes only if every term vanishes: $r\equiv1$, hence
$Z\equiv0$, hence $N(v_1,v_2)=2^{2n-3}$ for all admissible pairs by
Proposition~\ref{prop:red1}.
\end{proof}

\begin{corollary}\label{cor:pm2}
Conjecture~\ref{conj:main} holds whenever $k\equiv\pm2\pmod n$ with
$\gcd(k,n)=1$ (Frobenius transfer, Lemma~\ref{lem:frobenius}). Together with
Theorem~\ref{thm:k1}, the conjecture is proved for
$k\bmod n\in\{1,\,2,\,n-2,\,n-1\}$.
\end{corollary}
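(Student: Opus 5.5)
The plan is to combine Theorem~\ref{thm:k2main} with the Frobenius transfer of Lemma~\ref{lem:frobenius}, and then merge the result with Theorem~\ref{thm:k1}. Throughout, $k$ is read modulo $n$ with $1\le k\le n-1$, as the paper does after Lemma~\ref{lem:frobenius}.

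First, $k\equiv 2\pmod n$ with $\gcd(k,n)=1$. Here $\gcd(2,n)=1$, so $n$ is odd. If $n\ge 3$, the representative in $\{1,\dots,n-1\}$ is $k=2$, and Theorem~\ref{thm:k2main} applies directly. The degenerate case $n=1$ is handled separately below.

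Second, $k\equiv -2\pmod n$, i.e.\ $k=n-2$ with $n$ odd and $n\ge 3$. Lemma~\ref{lem:frobenius} says the conjecture holds for $(n,k)$ if and only if it holds for $(n,n-k)=(n,2)$. The latter is Theorem~\ref{thm:k2main}. The hypothesis $\gcd(n-2,n)=\gcd(2,n)=1$ is exactly the admissibility condition in both places.

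Finally, collect the residues. Theorem~\ref{thm:k1} covers $k\equiv\pm1$, and the two cases above cover $k\equiv\pm2$, giving $k\bmod n\in\{1,2,n-2,n-1\}$. For small $n$ some of these residues coincide (e.g.\ $n=3$ has $2\equiv -1$), which is harmless. For $n\le 2$, $\Fn$ has at most one element outside $\{0,1\}$, and the statement is either vacuous or already covered by Theorem~\ref{thm:k1}.

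The only real obstacle is bookkeeping. One must check that the residue-class reading of $k$ is legitimate; this holds because $d(k)$ depends only on $k\bmod n$ as an exponent on $\Fn^\ast$. One must also check that the transfer preserves the coprimality hypothesis, which it does since $\gcd(n-k,n)=\gcd(k,n)$. Both points are immediate from the statements already recorded.
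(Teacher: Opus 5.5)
Your proposal is correct and is essentially the paper's argument: Theorem~\ref{thm:k2main} settles $k=2$, Lemma~\ref{lem:frobenius} transfers this to $k=n-2$, and Theorem~\ref{thm:k1} supplies $k\equiv\pm1$. The only slip is in your edge-case remark: $\F_4$ has two elements outside $\{0,1\}$, not at most one. The case $n=2$ is harmless anyway, since no $k\equiv\pm2\pmod 2$ is coprime to $2$, and $k=1$ is covered by Theorem~\ref{thm:k1}.
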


\begin{remark}[geometric interpretation]\label{rem:fermat}
Squaring all data, Theorem~\ref{thm:k2main} states: on the supersingular
Fermat cubic $E:x^3+y^3=1$ over $\Fn$ ($n$ odd; exactly $2^n$ affine points,
$\#E(\Fn)=2^n+1$), for every affine $(p,q)\in E$ with $p,q\notin\{0,1\}$ the
system
\[
(W,T)\in E,\qquad W^5+p\,T^5=q
\]
has \emph{exactly one} solution, namely $(W,T)=(1/q,\;p/q)$ --- which is the
translate $P\oplus(0,1)$ of $P=(p,q)$ by the rational $3$-torsion point
$(0,1)$ in the group law with origin $[1:1:0]$. Geometrically the equation
cuts a degree-$15$ correspondence on $E\times E$; the average identity shows
that its $14$ residual intersection points are never rational. The appearance
of the two smallest Gold exponents $3=2+1$, $5=2^2+1$, and of the exponent
$5/3$ --- a ``Gold ratio'' companion of the Kasami exponent
$13=(4^3+1)/(4+1)$ --- is the structural reason the $k=2$ case closes.
\end{remark}

\begin{remark}[byproducts]\label{rem:byproducts}
(a) $x(x+1)^5$ is exactly $2$-to-$1$ on $\Fn$ ($n$ odd) with trace-split
fibers (Theorem~\ref{claim:Cprime}) --- a companion to the theorem that
$\MCM$ is a permutation for odd $k$;
(b) the trace-separation Lemma~\ref{lem:traceodd} yields an APN-free proof
that $|\Delta|=2^{n-1}$ for $k=2$;
(c) Theorem~\ref{thm:k2main} exhibits an exact matching between the affine
points of the Fermat cubic and the lines $y=T^5x+W^5$, $(W,T)\in E$;
(d) $\tau_0=a/b$ gives, unwinding the substitutions, a closed formula for the
unique $\lambda$-term surviving in each triple product of the original
character sums.
\end{remark}

%======================================================================
\section{Numerical verification}\label{sec:numerics}

All computations are implemented from first principles in pure Python
(no external libraries). Methodology:

\begin{itemize}
\item \textbf{Fields.} $\Fn$ is realized as $\F_2[x]/(f)$ with $f$ the
lexicographically smallest irreducible polynomial of degree $n$ (found by a
Rabin test implemented from scratch); multiplicative log/antilog tables are
built from a verified generator; associativity, distributivity, additivity of
$\Tr$, and balancedness of $\Tr$ are asserted on random samples.
\item \textbf{$\Delta$ and the facts.} For every admissible $(n,k)$, $n\le13$:
$|\Delta|=2^{n-1}$ with fibers exactly $\{b,b+1\}$;
identity \eqref{eq:id3} for all $b$; $\MCM$ is a permutation iff $k$ is odd,
and $\MCM(1)=k\bmod2$ (Lemma~\ref{lem:mcmparity}) --- all
\textbf{PASS} in all cases.
\item \textbf{$S$ via a Walsh--Hadamard transform.} $S(a)$ is computed for all
$a$ by a fast WHT after re-indexing by the dual basis map
$t\mapsto\bigl(\Tr(\alpha^it)\bigr)_i$; spot-checked against the definition and
against $2S(a)=\ch(a)\sum_b\ch(aD(b))$ (both \textbf{PASS} everywhere).
\item \textbf{The conjecture.} $Z(\rho)$ is evaluated from the $S$-array for
\emph{every} $\rho\in\Fn\setminus\{0,1\}$ (``exhaustive'' = all $2^n-2$ values
of $\rho$, equivalently by Proposition~\ref{prop:red1} \emph{all} admissible
pairs $(v_1,v_2)$). For $n\le9$ the resulting $N(\rho)$ was additionally
cross-checked against direct brute-force counting over $\Delta^2$ with set
membership, including scaling invariance $N(v_1,v_2)=N(tv_1,tv_2)$
(\textbf{PASS}).
\item \textbf{Closed form.} \eqref{eq:closedform} verified for all $a$ (odd
$n\le11$) and sampled $a$ ($n=13$); the bijection
$\Phi:H_1\to\Delta+1$ verified as a set equality with injectivity
(\textbf{PASS} in all listed cases).
\item \textbf{$A(\rho)$ experiments.} Direct evaluation of \eqref{eq:A111}
for $\psi\in\{\MCM^{-1},\ \MCM,$ $\ \text{random permutations},$ $\ x^3,\ x^{-1},
\MCM^{-1}(ax)+c,\ \lambda\MCM^{-1}\}$; root-count histograms
$r_e(\rho)$ for Gold, inverse, Kasami and control exponents.
\end{itemize}

\subsection{Coverage of the conjecture verification}

\begin{table}[h]
\centering
\small
\begin{tabular}{llp{6cm}l}
\toprule
$n$ & admissible $k$ tested & $\rho$-coverage & result \\
\midrule
$2$--$11$ & all $k$, $1\le k\le n-1$, $\gcd(k,n)=1$ & exhaustive (all $\rho$) & all pass \\
$12$ & $1,5,7,11$ & exhaustive for $k=5$ (and $k=7$ by Lemma~\ref{lem:frobenius}); & all pass \\
     &            & $k=1,11$ by Theorem~\ref{thm:k1}; sampled $400$ $\rho$ re-checks & \\
$13$ & $2,3,4,5,6$ (and $7\dots11$ by Lemma~\ref{lem:frobenius}; & exhaustive (all $8190$ $\rho$) & all pass \\
     & $1,12$ by Theorem~\ref{thm:k1}) & & \\
\bottomrule
\end{tabular}
\caption{Verification coverage. ``Exhaustive'' means all $\rho\in\Fn\setminus\{0,1\}$,
equivalently \emph{all} pairs of distinct nonzero $(v_1,v_2)$ up to the proved
scaling invariance (itself re-verified by brute force for $n\le9$).
In every single instance the count equals $2^{2n-3}$ exactly.}
\label{tab:coverage}
\end{table}

\subsection{The spectrum of $S$: data}

Table~\ref{tab:spectra} lists the value histogram of $S$ on $a\ne0$.
Sanity checks satisfied by all rows: $\sum_{a\neq0}S(a)^2=2^{2n-2}$ (Parseval
for the $\Delta$-indicator) and $\sum_{a\ne0}S(a)=2^n[0\in\Delta]-|\Delta|=2^{n-1}$.

\begin{table}[h]
\centering
\scriptsize
\begin{tabular}{lll}
\toprule
$(n,k)$ & $|\supp S\setminus0|$ & histogram of $S(a)$, $a\neq0$ (value: count) \\
\midrule
$(3,1),(3,2)$ & $1$ & $\{0{:}\,6,\ 4{:}\,1\}$\\
$(4,1),(4,3)$ & $1$ & $\{0{:}\,14,\ 8{:}\,1\}$\\
$(5,1),(5,4)$ & $1$ & $\{0{:}\,30,\ 16{:}\,1\}$\\
$(5,2),(5,3)$ & $16$ & $\{-4{:}\,6,\ 0{:}\,15,\ 4{:}\,10\}$\\
$(7,2..5)$ & $64$ & $\{-8{:}\,28,\ 0{:}\,63,\ 8{:}\,36\}$\\
$(8,3),(8,5)$ & $52$ & $\{-16{:}\,24,\ 0{:}\,203,\ 16{:}\,24,\ 32{:}\,4\}$\\
$(9,2),(9,4),(9,5),(9,7)$ & $226$ & $\{-32{:}\,1,\ -16{:}\,108,\ 0{:}\,285,\ 16{:}\,108,\ 32{:}\,9\}$\\
$(10,3),(10,7)$ & $196$ & $\{-64{:}\,10,\ -32{:}\,80,\ 0{:}\,827,\ 32{:}\,96,\ 64{:}\,10\}$\\
$(11,3),(11,4),(11,7),(11,8)$ & $1024$ & $\{-32{:}\,496,\ 0{:}\,1023,\ 32{:}\,528\}$\\
$(11,2),(11,5),(11,6),(11,9)$ & $892$ & $\{-64{:}\,22,\ -32{:}\,408,\ 0{:}\,1155,\ 32{:}\,440,\ 64{:}\,22\}$\\
$(12,5),(12,7)$ & $1133$ & $\{-96{:}\,52,-64{:}\,204,-32{:}\,348,0{:}\,2962,32{:}\,264,64{:}\,156,96{:}\,64,128{:}\,31,192{:}\,14\}$\\
$(13,3)$ & $4096$ & $\{-64{:}\,2016,\ 0{:}\,4095,\ 64{:}\,2080\}$\\
$(13,2)$ & $3550$ & $\{-128{:}\,91,\ -64{:}\,1652,\ 0{:}\,4641,\ 64{:}\,1716,\ 128{:}\,91\}$\\
\bottomrule
\end{tabular}
\caption{Spectra of $S$ (degenerate rows $k\equiv\pm1$ have
$S=2^{n-1}\mathbf 1_{a=1}$, cf.\ Theorem~\ref{thm:k1}; rows for $k$ and $n-k$
coincide by Lemma~\ref{lem:frobenius}). Note the variety: some classes are
plateaued with values $\{0,\pm2^{(n-1)/2}\}$ and support of size exactly
$2^{n-1}$ (e.g.\ $(11,3)$, $(13,3)$), others are $5$-valued (e.g.\ $(9,\cdot)$,
$(13,2)$) or richer (even $n$). In \emph{no} nondegenerate case is the support
free of zero-sum triples, and in no case is it contained in a coset
$\{a:\Tr(\gamma a)=1\}$: the termwise mechanism of Section~\ref{sec:termwise}
fails, yet the conjecture holds in every case.}
\label{tab:spectra}
\end{table}

\subsection{The $A(\rho)$ experiments}

For all tested cases (odd and even $n$, $k$ odd so that $\MCM$ is a
permutation): $A(\rho)=0$ for \emph{all} $\rho$ when $\psi=\MCM^{-1}$
--- as required by Theorem~\ref{thm:red3} plus the verified conjecture ---
while:

\begin{center}
\footnotesize
\begin{tabular}{llll}
\toprule
$\psi$ & example & all $A(\rho)=0$? & comment\\
\midrule
$\MCM^{-1}$ & $(n,k)\in\{(5,3),(7,3),(7,5),(8,3),(8,5)\}$ & \textbf{yes} & $\equiv$ conjecture\\
$\MCM$ & $(7,3)$: $A{=}{-}64$ at $\rho{=}12$; $(8,3)$: $A{=}{-}256$ at $\rho{=}2$ & no & property of the \emph{inverse}\\
random permutation & $n=5,7$ & no & property is special\\
$x^{3}$, Gold & $n=5,7$ & \textbf{yes} & Theorem~\ref{thm:gold}\\
$x^{-1}$ & $n=5,7$ & \textbf{yes} & Theorem~\ref{thm:inverse}\\
$\MCM^{-1}(ax)+c$ & $n=5,7$ & \textbf{yes} & inner scaling, outer shift\\
$\lambda\cdot\MCM^{-1}$, $\lambda\neq1$ & $n=5,7$ & no & mask-sensitivity\\
\bottomrule
\end{tabular}
\end{center}

Root-count histograms confirm Lemma~\ref{lem:monomial} and
Theorems~\ref{thm:gold}--\ref{thm:inverse}: $r_e(\rho)=1$ identically for all
Gold exponents and $e=-1$ at $n\in\{5,7,9\}$, while e.g.\ $r_{d(2)}$ on $n=7$
takes values $\{0,1,2\}$ and generic exponents fail badly.

\subsection{The $k=2$ campaign (Section~\ref{sec:k2})}

All structural statements of Section~\ref{sec:k2} were
verified for every $u\in\Fn^\ast$ at $n\in\{5,7,9,11\}$ (and $n=13$ where
indicated):
\begin{itemize}
\item radical dimensions $v_u\in\{1,3\}$; $G_\varepsilon$ values in
$\{0,\pm2^{(n+v_u)/2}\}$; $2S=G_0+G_1$; compatibility criterion
\eqref{eq:compat} --- all exact matches;
\item the master correspondence (Theorem~\ref{thm:master}): every incidence
is of type I or II with the stated parametrizations; $\#U_3=(2^{n-1}-1)/3$;
\item the $T$-values (Theorem~\ref{thm:Tvalues}): $T\equiv1$ on type I,
$T=\Tr(\tau)$ on type II --- no exceptions;
\item the classification (Theorem~\ref{thm:classification}): the $(3,4)$
type-split holds for every $u\in U_3$ (the three type-I elements do
\emph{not} in general span a $2$-dimensional subspace --- an early guess
refuted by the data);
\item Theorem~\ref{claim:Cprime} (formerly Claim~$\mathrm{C}'$): on every
$v_u=1$ radical, $T=1$ ($n\le11$ exhaustive); $g=x(x+1)^5$ exactly
$2$-to-$1$ with trace-split fibers and $G_1\equiv0$ --- verified exhaustively
up to $n=13$; the counting ingredients of its proof (exactly $2^{n-1}-1$
null pairs; $3$ per $U_3$-element, all of type II; $0$ per $v_u=1$ element)
verified for $n\in\{5,7,9,11\}$;
\item support laws of Theorem~\ref{thm:k2support}: exact match at
$n\in\{5,7,9,11\}$; consistency with Table~\ref{tab:spectra}
(e.g.\ $(13,2)$: the $\pm2^{(n+3)/2}$-values occur exactly at the
$182$ compatible $U_3$-elements);
\item every link of the proof of Theorem~\ref{thm:k2main} verified
independently: $H=G_0=\sum\ch(\mu(s^6+s^5))$;
$H(\mu)=W(\mu^{1/6})$; $8Z(\rho)=2^{2n}(r(a,b)-1)$ on sampled $\rho$;
$r(a,b)=1$ with unique root $a/b$ for \emph{all} admissible $(a,b)$ at
$n\in\{5,7,9,11\}$; the Fermat-cubic solution $(1/q,p/q)$.
\end{itemize}

%======================================================================
\section{Status of the conjecture}\label{sec:status}

\begin{theorem}[summary of what is proved here]\label{thm:summary}
Let $\gcd(k,n)=1$.
\begin{enumerate}
\item[(1)] $N(v_1,v_2)=2^{2n-3}+2^{-n}Z(v_2/v_1)$, and the conjecture is
equivalent to $Z\equiv0$ on $\Fn\setminus\{0,1\}$
(Proposition~\ref{prop:red1}); its average value is unconditionally
$2^{2n-3}$ (Remark~\ref{rem:average}).
\item[(2)] The conjecture is equivalent to the balancedness statement
$C(\rho)=2^{2n}$ for the Kasami derivative (Proposition~\ref{prop:red2}), and
--- reducing to odd $k$ by the Frobenius transfer
(Lemma~\ref{lem:frobenius}) --- to the vanishing statement
$A(\rho)=0$ for $\psi=\MCM^{-1}$ \eqref{eq:A111zero}
(Theorem~\ref{thm:red3}).
\item[(3)] The conjecture holds for $k\equiv\pm1\pmod{n}$, for all $n$
(Theorem~\ref{thm:k1}).
\item[(4)] The analogue of \eqref{eq:A111zero} holds for all Gold permutations
$x^{2^j+1}$ ($n$ odd) and for $x^{-1}$ (all $n$)
(Theorems~\ref{thm:gold},~\ref{thm:inverse}).
\item[(5)] For odd $n$, $S$ admits the closed form \eqref{eq:closedform}, and
$\Delta+1=\Phi(H_1)$ with $\Phi$ explicit and injective on $H_1$
(Theorem~\ref{thm:closedform}).
\item[(6)] The conjecture is true for every admissible $(n,k)$ with $n\le13$:
exhaustively over all pairs $(v_1,v_2)$ (Table~\ref{tab:coverage}).
\item[(7)] \textbf{The conjecture holds for $k=2$}, hence for all
$k\equiv\pm2\pmod n$ (Theorem~\ref{thm:k2main},
Corollary~\ref{cor:pm2}). The proof is unconditional and self-contained:
quadratic model, master correspondence, the counting theorem $G_1\equiv0$
(Theorem~\ref{claim:Cprime}; in particular $x(x+1)^5$ is exactly $2$-to-$1$
with trace-split fibers), the complete spectrum of $S$
(Theorem~\ref{thm:k2support}), the monomial reduction
$Z(\rho)=2^{2n-3}(r(a,b)-1)$ (Theorem~\ref{thm:monred}), the explicit root
$\tau_0=a/b$, and the average identity.
\end{enumerate}
Conjecture~\ref{conj:main} is now proved for
$k\bmod n\in\{1,2,n-2,n-1\}$ and remains open for the other residues; by (2)
it is exactly the statement that the fixed permutation $\psi=\MCM^{-1}$ of
$\Fn$ satisfies
\[
\sum_{t,a\in\Fn}(-1)^{\Tr\left(\psi(t)+\psi(t+a)+\psi(t+\rho a)\right)}=0
\qquad\text{for all }\rho\ne0,1 .
\]
\end{theorem}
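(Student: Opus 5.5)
Theorem~\ref{thm:summary} is a collection of statements already established in the paper, so the plan is to assemble it item by item with explicit citations. I would not prove anything new; I would only check that the hypotheses of each cited result match those stated here.

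\textbf{Items (1)--(2).} Item~(1) is Proposition~\ref{prop:red1} together with Remark~\ref{rem:average}. The average identity there is a pure double count and uses no hypothesis beyond $|\Delta|=2^{n-1}$, which holds by Lemma~\ref{lem:2to1}. For item~(2), the balancedness reformulation is Proposition~\ref{prop:red2}. For the $\psi$-statement I would proceed as follows.
\begin{itemize}
\item If $k$ is even, replace $k$ by $n-k$ using Lemma~\ref{lem:frobenius}. Since $\gcd(k,n)=1$, at least one of $k,n-k$ is odd.
\item For odd $k$, Lemma~\ref{lem:mcmparity} makes $\MCM$ a permutation, and Theorem~\ref{thm:red3} gives $8Z=2^nA$.
\item Hence $Z\equiv 0$ if and only if $A\equiv 0$.
\end{itemize}
I would note explicitly that this is what the final displayed formula means: $\psi=\MCM^{-1}$ is taken with $k$ replaced by its odd representative.

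\textbf{Items (3)--(6).} Item~(3) is Theorem~\ref{thm:k1}. Item~(4) is Theorems~\ref{thm:gold} and~\ref{thm:inverse}, routed through Lemma~\ref{lem:monomial}. Item~(5) is Theorem~\ref{thm:closedform}. Item~(6) rests on the computations recorded in Table~\ref{tab:coverage}. For $n=12$ and $n=13$ the residues not covered directly are supplied by Lemma~\ref{lem:frobenius} and Theorem~\ref{thm:k1}.

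\textbf{Item (7).} I would chain Theorem~\ref{claim:Cprime} (which gives $G_1\equiv0$ and $2S=G_0$) into Theorem~\ref{thm:monred}. This expresses $Z$ through the root count $r(a,b)$. Then:
\begin{itemize}
\item Lemma~\ref{lem:exproot} gives $r\ge1$.
\item The average identity gives $\sum_{\rho}Z(\rho)=0$.
\item Together these force $r\equiv1$, which is Theorem~\ref{thm:k2main}.
\end{itemize}
The case $k\equiv\pm2 \pmod n$ then follows from Corollary~\ref{cor:pm2}. The closing sentence, that the conjecture remains open for the other residues, is a statement of status rather than a claim requiring proof.

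\textbf{Main obstacle.} The only delicate point is bookkeeping. For $k=2$ the field degree $n$ is odd, but $\MCM$ is not a permutation there, so Theorem~\ref{thm:red3} cannot be used for that case. The $k=2$ argument must therefore go through $Z$ directly, via the quadratic model, and not through $A$. I would verify that this is exactly what Theorem~\ref{thm:monred} does: it starts from $Z$ and $H$, not from $\psi$.
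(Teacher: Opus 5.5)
Your proposal is correct and follows the paper, which supports this summary in the same way: each item is discharged by the result cited in it, and item~(7) is the chain Theorem~\ref{claim:Cprime} $\to$ Theorem~\ref{thm:monred} $\to$ Lemma~\ref{lem:exproot}, closed off by the average identity, exactly as in the proof of Theorem~\ref{thm:k2main}. Your two bookkeeping remarks are also accurate: the final displayed identity must be read with $\psi=\MCM^{-1}$ for the odd representative of $\{k,n-k\}$, and the $k=2$ argument bypasses Theorem~\ref{thm:red3} by working with $Z$ and $H=G_0$ directly, since there $x(x+1)^5$ is $2$-to-$1$ rather than a permutation.
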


\section{Discussion: towards the general case}\label{sec:strategy}

We collect what the results and data say about how a full proof is likely to
go, and where the difficulty sits.

\subsection*{(a) Cancellation is unavoidable}
The termwise mechanism fails (Section~\ref{sec:termwise}): for
$2\le k\le n-2$ the support of $S$ contains zero-sum triples in abundance, and
the spectra (Table~\ref{tab:spectra}) are not always plateaued, not contained
in affine trace-cosets, and vary with the cyclotomic class of $k$ in a way that
current Walsh-spectrum technology for Kasami-type sums (Bluher polynomial root
counts \cite{Bluher,HellesethKholosha}, Dillon--Dobbertin machinery
\cite{DillonDobbertin}) does not directly classify. Any successful argument
must produce exact cancellation in $Z(\rho)$ uniformly in $\rho$.

\subsection*{(b) The exact-fibration paradigm}
All fully-proved instances --- $\psi$ linear ($k\equiv\pm1$), Gold, inverse ---
follow one paradigm: a group action compatible with the plane family
$P_\rho$ (scaling $t=a\tau$) collapses $A(\rho)$ to an exact root count,
which is constantly $1$. For $\MCM^{-1}$ the scaling symmetry is broken
($\MCM^{-1}$ is not a monomial for $2\le k\le n-2$, and correspondingly
$A[\MCM]\neq0$ and mask-sensitivity appears). A substitute symmetry is needed.
Two candidates:
\begin{itemize}
\item \emph{Dobbertin's transfer.} $\MCM=P_0$ is linked to the generalized
Kasami rational function $q_\alpha(w)=\ell(w)/w^{q+1}$
($\ell$ linearized) by mutually inverse \emph{linearized} bijections
$\psi_\beta,\varphi_\alpha$ and the inversion:
$\MCM\circ\varphi_\alpha=\iota\circ q_\alpha$ with $\iota(x)=x^{-1}$
\cite{Dobbertin99,LeanRepo}. Substituting this into \eqref{eq:A111zero}
converts the statement into a character sum over the surface
$\sum_i c_i\,w_i^{q+1}/\ell(w_i)=0$ $(c=(1,\rho,\sigma))$ against a fixed
additive character. The $w^{q+1}/\ell(w)$ shape is exactly the
Bluher--Helleseth--Kholosha territory \cite{Bluher,HellesethKholosha}, where
exact root counts of $z^{q+1}+\tau z+\tau$ are available; the missing step is
an exact (not merely asymptotic) treatment of the two-dimensional family.
\item \emph{The $\Phi$-model.} By Theorem~\ref{thm:closedform}(iv) the
conjecture is a plain point count for the explicit rational map
$\Phi(m)=R(m)/m^{q^2-q+1}$ on the trace coset $H_1$. The numerator
$R=1+\sigma_k+\sigma_k^2$ and the Kasami exponent in the denominator are both
``$3$-step'' objects (they come from $(T^3+1)/(T+1)$ and $(q^3+1)/(q+1)$);
this strongly suggests a hidden cubic/Fermat-curve structure
($x^{q+1}+y^{q+1}=1$ already appeared in the proof), i.e.\ a fibration by
curves on which exact point counts are known.
\end{itemize}

\subsection*{(c) The difference-set connection}
$\Delta\setminus\{0\}$ is (for the Kasami case, by Dillon--Dobbertin
\cite{DillonDobbertin}) a cyclic difference set with Singer parameters in
$(\Fn^\ast,\times)$: $|\Delta^\ast\cap\mu\Delta^\ast|=2^{n-2}-1$ for all
$\mu\neq1$. Splitting the conjectured count by which of $x,y,z$ vanish shows
that the conjecture is equivalent to the \emph{triple} statement
\[
\bigl|\{(x,y,z)\in(\Delta^\ast)^3: v_1x+v_2y+v_3z=0\}\bigr|
=2^{2n-3}-3\cdot2^{n-2}+2 ,
\]
the pair statement being exactly the difference-set property. So the
conjecture is a canonical ``next moment'' beyond Dillon--Dobbertin: pair
correlations are classical, and the claim is that the specific DD sets also
have perfectly flat \emph{additive triple correlations along zero-sum
coefficient triples}. Any proof will likely need input beyond the difference
set property alone, since generic Singer-parameter sets have non-flat triple
correlations (random permutations fail $A\equiv0$).

\subsection*{(d) Even $n$}
Everything above is uniform in the parity of $n$ except the closed form
(Theorem~\ref{thm:closedform}, odd $n$). For even $n$ (then $k$ is
automatically odd), the reduction to \eqref{eq:A111zero} holds verbatim, and
the numerical verification covers $n\in\{4,6,8,10,12\}$ exhaustively. The
mechanisms available at even $n$ differ (e.g.\ $x\mapsto x^{q+1}$ is
$3$-to-$1$ on $\Fn^\ast$), which suggests that a uniform proof should proceed
through \eqref{eq:A111zero} rather than through Gold substitutions.

\subsection*{(e) Sharpest open form}
We therefore isolate, as the distilled open problem:

\begin{question}\label{q:open}
Let $\gcd(k,n)=1$, $k$ odd, $\psi=\MCM^{-1}$ on $\Fn$. Show that for every
$\rho\in\Fn\setminus\{0,1\}$,
\[
\sum_{t,a\in\Fn}(-1)^{\Tr\left(\psi(t)+\psi(t+a)+\psi(t+\rho a)\right)}=0 .
\]
Equivalently (Theorem~\ref{thm:red3}), prove Conjecture~\ref{conj:main}.
\end{question}

By Theorem~\ref{thm:gold} and Theorem~\ref{thm:inverse}, the class of
permutations satisfying this identity contains all Gold maps ($n$ odd) and the
inversion; the conjecture asserts it also contains $\MCM^{-1}$ --- a statement
our data confirm without exception up to $n=13$, and which is now proved for
$k\bmod n\in\{1,2,n-2,n-1\}$ (Theorems~\ref{thm:k1} and~\ref{thm:k2main}).
The proof of the $k=2$ case suggests a template for the general case:
eliminate the trace-restriction (the analogue of Theorem~\ref{claim:Cprime},
e.g.\ ``$\MCM_k$ is exactly $2$-to-$1$ with trace-split fibers for all even
$k$''), reduce the triple correlation to a root count by monomial
substitutions adapted to the Gold-ratio structure of $d(k)=(q^3+1)/(q+1)$,
exhibit an explicit root, and invoke the average identity. The first and
second steps are the ones that used the quadratic ($k=2$) structure; finding
their analogues for $k\ge3$ is, in our view, the most promising route to the
full conjecture.

%======================================================================
\section*{Acknowledgements and provenance}

\paragraph{Provenance of the proof.}
The exact triple-count condition studied here was first formulated by Claude
Carlet in 2018 as the cyclic-additive difference-set condition with
Singer-like parameters~\cite{Carlet2018}. Carlet explicitly left the Kasami
instance open. It was subsequently posed as an open problem at the
NSUCRYPTO~2019 cryptographic olympiad~\cite{NSUC2019}; Nagy and Vajda received
it there under an undisclosed individual proposer. Nagy and Vajda prepared
the companion \textsc{Lean}~4 repository \cite{LeanRepo}, containing formally
verified proofs of the APN and AB properties of the Kasami monomial function,
and used it together with the problem statement and background hints to prompt
the AI assistant \emph{Claude Fable 5}. All derivations, reductions and proofs
presented in this report --- including the complete proof of the case $k=2$ in
Section~\ref{sec:k2} --- were obtained in that process. All of the new results
and proofs of this paper have subsequently been formally verified in the
\textsc{Lean}~4 theorem prover; that formalization is due to
\textbf{Aristotle} (Harmonic), \texttt{aristotle-harmonic@harmonic.fun}. This
document and its source are maintained in the repository \cite{KasamiRepo}.

\paragraph{Provenance of background facts.}
The APN property (all $\gcd(k,n)=1$), the AB property ($n$ odd), the key
identity \eqref{eq:id3}, the MCM permutation theorem (odd $k$; both parities of
$n$), Dobbertin's transfer theorems, and the Walsh divisibility layer are
formally verified in \textsc{Lean}~4 in \cite{LeanRepo}; we inspected that
repository and its hypotheses directly (commit \texttt{a92ff58}). The
$|\Delta|=2^{n-1}$ statement, the reductions, the closed form, and everything
in Sections~\ref{sec:reduction1}--\ref{sec:closedform} are proved in this
document from those facts and first principles. The proof of the $k=2$ case
(Section~\ref{sec:k2}) is self-contained given the character/quadratic-form
preliminaries: its only external inputs are the elementary facts of
Section~\ref{sec:facts}, and even the APN input can be replaced by
Lemma~\ref{lem:traceodd}; the Bluher--Helleseth--Kholosha results are quoted
only in the auxiliary classification (Theorem~\ref{thm:classification})
and are not used in Theorems~\ref{claim:Cprime} or~\ref{thm:k2main}.

\paragraph{Acknowledgements.}
We gratefully acknowledge the support of \textbf{Resoloon Technologies Ltd}
(\url{resoloon.com}), which enabled access to Claude Fable~5. Special thanks
go to \'Abris Nagy. This research was supported by the National Research, 
Development and Innovation Fund of Hungary under grant SNN 152582.

\end{document}